\title{\Large\bf 
Inverse $K$-Chevalley formulas for semi-infinite flag\\[2mm]
manifolds, I: minuscule weights in ADE type
\footnote{Key words and phrases: 
Chevalley formula, semi-infinite flag manifold, quantum Bruhat graph, double affine Hecke algebra.
\newline
Mathematics Subject Classification 2020: Primary 20C08, 17B37; Secondary 14N15, 14M15, 33D52, 81R10.}%
}
\author{%
Takafumi Kouno \\
 \small Department of Mathematics, Tokyo Institute of Technology, \\
 \small 2-12-1 Oh-okayama. Meguro-ku, Tokyo 152-8551, Japan \\
 \small (e-mail: {\tt kouno.t.ab@m.titech.ac.jp}) \\[5mm]
Satoshi Naito \\ 
 \small Department of Mathematics, Tokyo Institute of Technology, \\
 \small 2-12-1 Oh-okayama, Meguro-ku, Tokyo 152-8551, Japan \\
 \small (e-mail: {\tt naito@math.titech.ac.jp}) \\[5mm]
 Daniel Orr \\ 
 \small Department of Mathematics (MC 0123), 460 McBryde Hall, 
        Virginia Tech, \\
 \small 225 Stanger St., Blacksburg, VA 24061, 
 U.\,S.\,A. \\ 
 \small (e-mail: {\tt dorr@vt.edu}) \\[5mm]
%
%
Daisuke Sagaki \\ 
 \small Institute of Mathematics, University of Tsukuba, \\
 \small 1-1-1 Tennodai, Tsukuba, Ibaraki 305-8571, Japan \\
 \small (e-mail: {\tt sagaki@math.tsukuba.ac.jp})
}
\date{}
\renewcommand\section{\@startsection{section}{1}{0pt}
{-3.5ex plus -1ex minus -.2ex}{1.0ex plus .2ex}{\large\bf}}
\renewcommand\subsection{\@startsection{subsection}{1}{0pt}
{2.5ex plus 1ex minus .2ex}{-1em}{\bf}}
\numberwithin{equation}{section}
\theoremstyle{plain}
\newtheorem{thm}{Theorem}[section]
\newtheorem{lem}[thm]{Lemma}
\newtheorem{prop}[thm]{Proposition}
\newtheorem{ithm}{Theorem}
\theoremstyle{definition}
\newtheorem{dfn}[thm]{Definition}
\theoremstyle{remark}
\newtheorem{rem}[thm]{Remark}
\newcommand{\BZ}{\mathbb{Z}}
\newcommand{\BQ}{\mathbb{Q}}
\newcommand{\BC}{\mathbb{C}}
\newcommand{\CO}{\mathcal{O}}
\newcommand{\HH}{\mathbb{H}}
\newcommand{\hHH}{\widehat{\HH}}
\newcommand{\heis}{\mathfrak{H}}
\newcommand{\hHeis}{\widehat{\heis}}
\newcommand{\sG}{\mathsf{G}}
\newcommand{\kqt}{\mathsf{K}}
\newcommand{\Fg}{\mathfrak{g}}
\newcommand{\Fh}{\mathfrak{h}}
\newcommand{\ve}{\varepsilon}
\newcommand{\vpi}{\varpi}
\newcommand{\lng}{w_{\circ}}
\newcommand{\af}{\mathrm{af}}
\newcommand{\wt}{\mathrm{wt}}
\newcommand{\mcr}[1]{\lfloor #1 \rfloor}
\newcommand{\bQG}{\mathbf{Q}_{G}}
\newcommand{\bQGr}{\mathbf{Q}_{G}^{\mathrm{rat}}}
\newcommand{\Kr}{K_{H \times \BC^*}(\bQGr)}
\newcommand{\Walk}{\mathbf{W}}
\newcommand{\WalkStd}{\mathbf{W}_{w}^{\vec{\eta}}}
\newcommand{\QWalk}{\mathbf{QW}}
\newcommand{\DQWalk}{\widetilde{\QWalk}}
\newcommand{\Mat}{\mathrm{Mat}}
\newcommand{\QBG}{\mathrm{QBG}}
\newcommand{\Inv}{\mathrm{Inv}}
\newcommand{\bra}[1]{[\![#1]\!]}
\newcommand{\pra}[1]{(\!(#1)\!)}
\newcommand{\pair}[2]{\langle #1, #2 \rangle}
\newcommand{\tg}{\widetilde{g}}
\newcommand{\tX}{\widetilde{X}}
\newcommand{\J}{J}
\newcommand{\WJ}{W^{\J}}
\newcommand{\DJp}{\Delta^{+} \setminus \Delta_{\J}^{+}}
\newcommand{\Ht}{\mathrm{ht}}
\newenvironment{enu}{%
 \begin{enumerate}%
}{\end{enumerate}}
\begin{document}

\maketitle


\begin{abstract}
We prove an explicit inverse Chevalley formula in the equivariant $K$-theory of semi-infinite flag manifolds of simply-laced type. By an inverse Chevalley formula, we mean a formula for the product of an equivariant scalar with a Schubert class, expressed as a $\BZ[q^{\pm 1}]$-linear combination of Schubert classes twisted by equivariant line bundles. Our formula applies to arbitrary Schubert classes in semi-infinite flag manifolds of simply-laced type and equivariant scalars $e^{\lambda}$, where $\lambda$ is an arbitrary minuscule weight. By a result of Stembridge, our formula completely determines the inverse Chevalley formula for arbitrary weights in simply-laced type, except for type $E_8$. The combinatorics of our formula is governed by the quantum Bruhat graph, and the proof is based on a limit from the double affine Hecke algebra. As such, our formula also provides an explicit determination of all nonsymmetric $q$-Toda operators for minuscule weights in ADE type.
\end{abstract}


\section{Introduction.} 
\label{sec:intro}

Let $\bQGr$ be the semi-infinite flag manifold. This is a 
reduced ind-scheme whose set of 
$\BC$-valued points is $G(\BC\pra{z})/(H(\BC) \cdot N(\BC\pra{z}))$ 
(see \cite{Kat2} for details), where $G$ is 
a simply-connected simple algebraic group over $\BC$, $B=HN\subset G$ is a Borel subgroup, $H$ is a maximal torus, and $N$ is the unipotent radical of $B$.
For each affine Weyl group element $x\in W_{\af} = W \ltimes Q^{\vee}$, with $W = \langle s_i \mid i \in I \rangle$ the (finite) Weyl group and $Q^{\vee} = \bigoplus_{i \in I} \BZ \alpha_i^{\vee}$ 
the coroot lattice of $G$, one has a semi-infinite Schubert variety $\bQG(x)\subset\bQGr$, which is infinite-dimensional and is given as an orbit closure for the Iwahori subgroup $\mathbf{I}\subset G(\BC\bra{z})$.
We distinguish the semi-infinite Schubert variety 
$\bQG := \bQG(e) \subset \bQGr$ associated to the identity element $e$ of 
the affine Weyl group and also call $\bQG$ the semi-infinite flag manifold. 

Our main object of study is the equivariant $K$-group $K_{H \times \BC^{\ast}}(\bQG)$ (and that of $\bQGr$ denoted by $K_{H \times \BC^{\ast}}(\bQGr)$), which is a variant of the $K$-group $K_{\tilde{\mathbf{I}}}^{\prime}(\bQG)$ introduced recently in \cite{KNS}. Our $K$-group is a module over the equivariant scalar ring $\BZ[q^{\pm 1}][P]$, where $P$ is the weight lattice of $G$, $\BZ[P]=\BZ[e^{\mu} : \mu\in P]$ is the character ring of $H$, and $q\in R(\BC^*)$ is the character of  loop rotation. As such, the $K$-group $K_{H \times \BC^{\ast}}(\bQG)$ is a $\BZ[q^{\pm 1}][P]$-submodule of an extension of scalars of the equivariant $K$-group $K_{\tilde{\mathbf{I}}}^{\prime}(\bQG)$ of \cite{KNS} with respect to the Iwahori subgroup $\mathbf{I}$ and loop rotation.

A fundamental result of \cite{KNS} is the combinatorial Chevalley formula for dominant weights 
in the $K$-group $K_{\tilde{\mathbf{I}}}^{\prime}(\bQG)$ (and hence in $K_{H \times \BC^{\ast}}(\bQG)$). This formula describes, 
in terms of semi-infinite Lakshmibai-Seshadri paths, 
the tensor product of the class of the line bundle $[\CO_{\bQG}(\lambda)]$ 
associated to a dominant weight $\lambda\in P^+$ with the class of 
the structure sheaf $[\CO_{\bQG(x)}]$ of 
a semi-infinite Schubert variety $\bQG(x) \subset \bQG$ 
for $x = w t_{\xi} \in W_{\af}^{\geq 0} := W \times Q^{\vee,+} \subset W_{\af}$, 
where $Q^{\vee,+} := \sum_{i \in I} \BZ_{\geq 0} \alpha_{i}^{\vee} \subset Q^{\vee}$.
This was followed up in \cite{NOS} by another combinatorial Chevalley formula in $K_{H\times\BC^*}(\bQG)$, giving the tensor product of a Schubert class with an {\em antidominant} line bundle. The two Chevalley formulas---dominant \cite{KNS} and antidominant \cite{NOS}---were unified in \cite{LNS}, giving the general Chevalley formula in $K_{H\times\BC^*}(\bQG)$ for arbitrary weights $\lambda\in P$.

The Chevalley formulas of \cite{KNS,NOS,LNS} thus provide the complete analog for semi-infinite flag manifolds of their previously well-understood $K$-theory counterparts for the standard Kac-Moody flag varieties \cite{PR99,LiSe03,GR04,LP,LeSh14}. In all such formulas, the objective is to expand the tensor product of a Schubert class with an equivariant line bundles, as a linear combination of Schubert classes with equivariant scalar coefficients. In the case of $\bQG$, this takes the form:
\begin{align}\label{E:chev}
[\CO_{\bQG(x)}(\lambda)] &= \sum_{\substack{y\in W_\af^{\ge 0}\\\mu\in P}} c_{x,y}^{\lambda,\mu} \cdot e^\mu \cdot [\CO_{\bQG(y)}]
\end{align}
where $x\in W_\af^{\ge 0}$, $\lambda\in P$, and $c_{x,y}^{\lambda,\mu}\in\BZ\bra{q^{-1}}$. The (generally infinite) sum on the right-hand side of \eqref{E:chev} satisfies a notion of convergence introduced in \cite{KNS}.

In this paper, we shall study the inverse expansion in $K_{H\times\BC^*}(\bQG)$:
\begin{align}\label{E:inv-chev}
e^\lambda\cdot [\CO_{\bQG(x)}] &= \sum_{\substack{y\in W_\af^{\ge 0}\\\mu\in P}} d_{x,y}^{\lambda,\mu}\cdot [\CO_{\bQG(y)}(\mu)]
\end{align}
for $x\in W_\af^{\ge 0}$ and $\lambda\in P$. In contrast to \eqref{E:chev}, the expansion \eqref{E:inv-chev} exhibits {\em finiteness}, as established in \cite{Orr} for simply-laced $G$, namely: (i) the right-hand side of \eqref{E:inv-chev} is always a finite sum, and (ii) $d_{x,y}^{\lambda,\mu}\in\BZ[q^{\pm 1}]$, for arbitrary $\lambda,\mu\in P$ and $x,y\in W_\af^{\ge 0}$. (These properties are expected to hold for arbitrary $G$.)

We call any formula for the right-hand side of \eqref{E:inv-chev} an {\em inverse Chevalley formula} for $\lambda\in P$ in $K_{H\times\BC^*}(\bQG)$. The analogous expansion for finite-dimensional flag manifolds $G/B$ was studied by Mathieu \cite{M00} in the context of filtrations of $B$-modules. In fact, one can show that the truncation of \eqref{E:inv-chev} to $x,y\in W$ recovers the expansion of \cite[p. 239]{M00} in $K_H(G/B)$, with $[\CO_{\bQG(w)}]$ for $w\in W$ corresponding to the class $[\CO_{X(w)}]$ given by the structure sheaf of the Schubert variety
\begin{align}\label{E:fin-schub}
X(w)=\overline{Bww_\circ B/B}\subset G/B
\end{align}
where $w_\circ$ is the longest element of $W$.
Thus \eqref{E:inv-chev} incorporates the necessary ``corrections'' in $K_{H\times\BC^*}(\bQG)$ to the classical inverse Chevalley formulas in $K_H(G/B)$.

In the case of $K_H(G/B)$, there is a simple transformation to pass between ordinary and inverse Chevalley formulas (as explained in, e.g., \cite[p. 239]{M00}). The lack of such a transformation for $K_{H\times\BC^*}(\bQG)$, especially in light of the finiteness of \eqref{E:inv-chev}, justifies the independent study of inverse Chevalley formulas in the semi-infinite setting. 

The purpose of this paper is to prove a completely explicit, combinatorial inverse Chevalley formula in the equivariant $K$-group $K_{H \times \BC^*}(\bQG)$ in the case of simply-laced group $G$ and a minuscule weight $\lambda\in P$. Before stating our results more precisely, let us discuss further motivations for this work.


\subsection{nil-DAHA and Heisenberg actions on $\Kr$.}
One approach to understanding formulas \eqref{E:chev} and \eqref{E:inv-chev} is that they relate two actions of the group algebra $\BZ[P]$ on $\Kr$, one given by the tensor product with equivariant line bundles (the left-hand side of \eqref{E:chev}) and the other by equivariant scalar multiplication (the left-hand side of \eqref{E:inv-chev}). These actions of $\BZ[P]$ extend to that of two distinct algebras on $\Kr$, the nil double affine Hecke algebra (nil-DAHA) and a $q$-Heisenberg algebra.

Multiplication by equivariant scalars extends to a left action of the nil-DAHA $\HH_0$ on $\Kr$, in a way that is conceptually similar to the action of nil-Hecke algebras on the equivariant $K$-theory of Kac-Moody flag varieties \cite{KK90}. On the right, however, instead of a nil-Hecke algebra, one has an action of a $q$-Heisenberg algebra $\heis$. This is generated by tensor products with equivariant line bundles $[\CO(\lambda)] \ (\lambda\in P)$ and translations $Q^\vee \cong H(\BC\pra{z})/H(\BC\bra{z})$. The two actions commute, making $\Kr$ an $(\HH_0,\heis)$-bimodule. 

This bimodule structure is a fundamental tool in the study of $\Kr$, as standard methods such as localization are not available. Furthermore, as explained in \cite{Orr}, it gives a geometric realization of the nonsymmetric $q$-Toda system introduced in \cite{CO}; see \cite{GL03,BF1,BF2} for related results on the usual $q$-Toda system and \cite{Ko,KoZ} for its $(\mathsf{q},\mathsf{t})$-extension given by Macdonald difference operators in type $A$. For us, the fact that the $\HH_0$-action on $\Kr$ includes the operators of multiplication by equivariant scalars is key. We use the limit construction \cite{Orr} of the $\HH_0$-action on $\Kr$ to find and prove our inverse Chevalley formula in $K_{H\times\BC^*}(\bQG)$, which is given by Theorem~\ref{ithm1} below. 


\subsection{Quantum $K$-theory of $G/B$.}
In \cite{Kat1,Kat3}, Kato has established a $\BZ[P]$-module isomorphism---with $\BZ[P]$ acting by  equivariant scalars---from the (completed) $H$-equivariant quantum $K$-group $QK_{H}(G/B) := K_{H}(G/B) \otimes \BZ[\![Q^{\vee,+}]\!]$ 
of the finite-dimensional flag manifold $G/B$ onto 
the $H$-equivariant $K$-group $K_{H}(\bQG)$ obtained by specializing $q=1$ in $K_{H\times\BC^*}(\bQG)$.
(Here $\BZ[\![Q^{\vee,+}]\!]$ is the ring of formal power series 
in the (Novikov) variables $Q_{i}$, $i \in I$.) Kato's isomorphism respects Schubert classes and intertwines the quantum multiplication in $QK_{H}(G/B)$ with the tensor product by line bundles in $K_H(\bQG)$. Thus it provides a means to transport formulas from $K_H(\bQG)$ to $QK_H(G/B)$. In the sequel \cite{KNOS2} to this paper, we will use Kato's isomorphism to derive a corresponding inverse Chevalley formula in $QK_{H}(G/B)$.

\subsection{Our results.}
Let us now explain our results in more detail. Recall that a weight $\lambda\in P$ is called minuscule if $\langle \lambda,\alpha^\vee\rangle\in\{0,\pm 1\}$ for all $\alpha\in\Delta$. Nonzero minuscule weights exist in all types except $E_8,F_4$, and $G_2$.

Our results explicitly describe the inverse Chevalley formula in $K_{H\times\BC^*}(\bQG)$ for arbitrary minuscule weights $\lambda\in P$ in the case when $G$ is simply-laced. By iteration, our formulas completely determine the inverse Chevalley rule for arbitrary weights in ADE type (except in type $E_8$). Indeed, a result of Stembridge \cite{Stem} (as stated in \cite[Theorem 2.1]{Len}) asserts that, in all types except $E_8, F_4$, and $G_2$, the minuscule weights form a set of generators for the weight lattice. (The full version of Stembridge's result, which holds in arbitrary type, requires quasi-minuscule weights. We plan to take up the study of our constructions in the case of quasi-minuscule weights elsewhere.)

Any minuscule weight belongs to the Weyl group orbit of a dominant minuscule weight, i.e., a minuscule fundamental weight. Suppose $\varpi_k\in P^+$ is a minuscule fundamental weight. Set $J=I\setminus\{k\}$, and consider the parabolic subgroup $W_J=\langle s_j \mid j\in J\rangle$, which is the stabilizer of $\varpi_k$. Let $W^J$ be the set of minimal coset representatives for $W/W_J$. Finally, let $\lambda=x\varpi_k\in P$ be an arbitrary minuscule weight, where $x\in W^J$.

\subsubsection{Algebraic formula.}

For $G$ simply-laced, $\lambda$ as above, and any $w\in W$, our first main result gives an algebraic expression for the product $e^\lambda\cdot[\CO_{\bQG(w)}]\in K_{H\times\BC^*}(\bQG)$, in terms of the right $q$-Heisenberg action on $K_{H\times\BC^*}(\bQGr)$. This is given explicitly as a sum over a set $\QWalk_{\lambda,w}$ of walks $(w_1,\dotsc,w_n)$ in the quantum Bruhat graph $\QBG(W)$ \cite{BFP}, beginning at $w_0=w$ and with steps prescribed by a set of positive roots determined by $\lambda$. (Here $n$ is the length of the minimal representative of the coset $\lng W_J$.)

\begin{ithm}[= Theorem~\ref{T:K-w-arb}]\label{ithm1}
Assume that $G$ is of type $ADE$, but not of type $E_8$. For any minuscule weight $\lambda=x\varpi_k\in P$, where $x\in W^J$, and any $w\in W$, we have:
\begin{align}\label{E:ithm1}
e^\lambda\cdot[\CO_{\bQG(w)}] &= \sum_{\mathbf{w}=(w_1,\dotsc,w_n)\in\QWalk_{\lambda,w}}[\CO_{\bQG(w_n)}]\cdot  \tg^+_{\mathbf{w}} \, X^{-\lng w_l^{-1}\lambda}\, \tg^-_{\mathbf{w}},
\end{align}
where $l=\ell(x)$ and $\tg^+_{\mathbf{w}} \, X^{-\lng w_l^{-1}\nu}\, \tg^-_{\mathbf{w}}$ is an element of the $q$-Heisenberg algebra $\heis$ given explicitly by \eqref{E:tg-} and \eqref{E:tg+}.
\end{ithm}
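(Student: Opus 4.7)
The plan is to transfer an algebraic identity from within the double affine Hecke algebra (DAHA) to the module $K_{H\times\BC^*}(\bQGr)$ via the limit construction of \cite{Orr}, and then to specialize to $K_{H\times\BC^*}(\bQG)$. The key structural fact is that $K_{H\times\BC^*}(\bQGr)$ is an $(\HH_0,\heis)$-bimodule in which multiplication by $e^\lambda$ is built into the left nil-DAHA action. My strategy is to decompose $e^\lambda$ as a product of operators in $\HH_0$ whose effect on Schubert classes is controlled by the quantum Bruhat graph, and then collect the resulting terms via the right $\heis$-action.

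First, I would fix a reduced expression for the minimal coset representative of $\lng W_J$ in $W/W_J$---of length $n$---and use it to decompose $X^\lambda$ in the DAHA through successive applications of Cherednik-style intertwiners. Because $\varpi_k$ is minuscule, the orbit $W\varpi_k$ is in bijection with $W^J$ and neighboring cosets differ by a single simple reflection; consequently, each intertwiner either fixes $X^{v\varpi_k}$ or replaces it by $X^{s_i v\varpi_k}$ modulo a Hecke correction term. Taking the $t\to 0$ limit of \cite{Orr} converts each Hecke correction into a combinatorial step along an edge of $\QBG(W)$.

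Next, I would apply this decomposition to $[\CO_{\bQG(w)}]$ and expand. Since the action of the Demazure-Lusztig-type operators on Schubert classes in $K_{H\times\BC^*}(\bQGr)$ is governed by the semi-infinite Bruhat order together with the quantum Bruhat graph, the resulting sum is naturally indexed by walks $\mathbf{w}=(w_1,\ldots,w_n)\in\QWalk_{\lambda,w}$ starting at $w_0=w$: at each step the walk either remains in place or moves along a QBG edge, matching the dichotomy above. The endpoint $w_n$ yields the surviving Schubert class, while the accumulated contributions from the intertwiners combine into the claimed Heisenberg element $\tg^+_{\mathbf{w}}\, X^{-\lng w_l^{-1}\lambda}\, \tg^-_{\mathbf{w}}$, with the position $l=\ell(x)$ marking the natural split between the portion of the reduced word used to build up $\lambda$ and the portion that subsequently transforms it.

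The principal obstacle I anticipate is the precise bookkeeping of the Heisenberg factors produced along the walk: tracking the commutations of $X$-elements past Demazure operators generates many terms, and showing that these assemble into exactly the three-part product $\tg^+_{\mathbf{w}}\, X^{-\lng w_l^{-1}\lambda}\, \tg^-_{\mathbf{w}}$ prescribed by \eqref{E:tg-} and \eqref{E:tg+} will require careful case analysis, depending on whether each QBG step is a Bruhat descent or a quantum edge, and on whether it occurs before or after position $l$. A secondary challenge is to verify that the set of walks naturally generated by this algebraic expansion coincides with $\QWalk_{\lambda,w}$ as defined combinatorially, including all sign and edge-labeling conventions.
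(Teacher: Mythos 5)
Your outline is headed in the right direction and agrees with the paper's strategy at the top level: one computes $\varrho_0(X^{-\lambda})$ as the $\mathsf{v}\to 0$ limit of $\varkappa(\varrho(Y^{\lambda}))$ via \cite[Theorem 5.1]{Orr}, and the quantum Bruhat graph enters through the expansion of a product of Demazure--Lusztig-type operators as a sum over matrix paths. But two essential ingredients are missing. First, the factorization you propose (intertwiners moving along the orbit $W\varpi_k$) is not the one that produces the stated indexing set. The paper factors the translation $t_\lambda = x\pi y$ in $W_{\mathrm{ex}}$, coming from $t_{\varpi_k}=\pi\mcr{\lng}=\pi yx$, so that $Y^{\lambda}=\mathsf{t}^{-(\rho,\lambda)}T_{x^{-1}}^{-1}\pi T_y$ in $\hHH$ and hence $\varrho(Y^{\lambda})=\mathsf{t}^{-(\rho,\lambda)}\sG^-_{\beta_l}\dotsm\sG^-_{\beta_1}\,t_\lambda\,\sG^+_{\gamma_1}\dotsm\sG^+_{\gamma_m}$ with $\beta_r\in\Inv(x)$ and $\gamma_s\in\Inv(y^{-1})$. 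It is exactly this choice that yields the step labels $\vec{\eta}=(\beta_l,\dotsc,\beta_1,\gamma_1,\dotsc,\gamma_m)$ defining $\QWalk_{\lambda,w}$, the split at position $l=\ell(x)$, and the middle factor $X^{-\lng w_l^{-1}\lambda}$. A different decomposition gives a different indexing set, so this is not a matter of bookkeeping conventions.

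Second, and more seriously, what you describe as a ``secondary challenge'' --- showing that the surviving terms are precisely the quantum walks --- is the core of the proof, and it requires combinatorial input that your sketch does not provide. After distributing the diagonal matrix $\mathsf{t}^{-(\rho,\lambda)}\varkappa(t_\lambda)$, whose entries carry the powers $\mathsf{t}^{\ell_\lambda^-(w)-\ell_\lambda^+(w)}$, through the two products of $\varkappa(\sG^{\mp}_{\eta})$'s, one must check that every resulting matrix entry has nonnegative order in $\mathsf{v}$ and that the order is zero exactly when the corresponding step is stationary or a QBG edge. This rests on the length-difference identities $\ell(w)-\ell(s_{\beta_r}w)=2(\ell^-_{\lambda,r}(w)-\ell^-_{\lambda,r}(s_{\beta_r}w))-1$ and the analogue for the $\gamma_s$ (Proposition~\ref{P:L}), which depend essentially on $\lambda$ being minuscule and whose proof occupies all of Appendix~\ref{A:L}. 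The abstract existence of the limit (from \cite{CO}) does not identify which terms survive, so without these identities the claim that the expansion ``is naturally indexed by $\QWalk_{\lambda,w}$'' is unsupported. Note also that the computation is carried out entirely inside $\Mat_W(\heis)$ via $\varkappa$, not by acting with Demazure--Lusztig operators directly on Schubert classes; the passage to $K_{H\times\BC^*}(\bQG)$ happens only once, at the very end, through \cite[Theorem 5.1]{Orr}.
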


Acting by translations from the $q$-Heisenberg algebra, one immediately obtains a corresponding formula for $e^\lambda\cdot[\CO_{\bQG(wt_\xi)}]\in K_{H\times\BC^*}(\bQGr)$, for any $\xi\in Q^\vee$.

In order to prove Theorem~\ref{ithm1}, we apply the main result of \cite{Orr}. The main step in our proof (see Theorem~\ref{T:Y-lim}) is the intricate computation of a limit, as $\mathsf{t}\to 0$, from the polynomial representation of the double affine Hecke algebra (DAHA). Thus one should also regard Theorem~\ref{ithm1} as an explicit determination of all nonsymmetric $q$-Toda operators, in the sense of \cite{CO,Orr}, for minuscule weights in ADE type.

\subsubsection{Combinatorial formula.}

Our second main result expresses the same product $e^\lambda\cdot[\CO_{\bQG(w)}]$ combinatorially. To achieve this, we enhance the set of quantum walks $\QWalk_{\lambda,w}$ to a set $\DQWalk_{\lambda,w}$ of {\em decorated quantum walks}. Roughly speaking, a decorated quantum walk $(\mathbf{w},\mathbf{b})\in \DQWalk_{\lambda,w}$ consists of a quantum walk $\mathbf{w}=(w_1,\dotsc,w_n)\in\QWalk_{\lambda,w}$, together with a decoration $\mathbf{b}: S(\mathbf{w})\to\{0,1\}$. The latter is a $\{0,1\}$-valued function on an explicit subset $S(\mathbf{w})\subset \{t : w_t=w_{t-1}\}$ of the stationary steps in the walk $\mathbf{w}$. Each $(\mathbf{w},\mathbf{b})\in \DQWalk_{\lambda,w}$ carries a sign $(-1)^{(\mathbf{w},\mathbf{b})}\in\{\pm 1\}$, a weight $\wt(\mathbf{w},\mathbf{b})\in Q^\vee$, and a degree $\deg(\mathbf{w},\mathbf{b})\in\BZ$. For details, see \S\ref{SS:DQW}.

Our combinatorial inverse Chevalley formula reads as follows:

\begin{ithm}[= Theorem~\ref{T:K-w-arb-dec}]\label{ithm2}
Assume that $G$ is of type $ADE$, but not of type $E_8$. For any minuscule weight $\lambda=x\varpi_k\in P$, where $x\in W^J$, and any $w\in W$, we have
\begin{align}\label{E:ithm2}
&e^{\lambda}\cdot[\CO_{\bQG(w)}]\\
&\quad = \sum_{(\mathbf{w},\mathbf{b})\in\DQWalk_{\lambda,w}}(-1)^{(\mathbf{w},\mathbf{b})}q^{\deg(\mathbf{w},\mathbf{b})}\cdot[\CO_{\bQG(w_nt_{-w_\circ(\wt(\mathbf{w},\mathbf{b}))})}(-w_\circ w_l^{-1}\lambda+\wt(\mathbf{w},\mathbf{b}))].\notag
\end{align}
\end{ithm}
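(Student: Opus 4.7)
The plan is to deduce Theorem~\ref{ithm2} from Theorem~\ref{ithm1} by expanding the $q$-Heisenberg element $\tg^+_{\mathbf{w}}\,X^{-\lng w_l^{-1}\lambda}\,\tg^-_{\mathbf{w}}$ in \eqref{E:ithm1} into a sum of monomials, and then evaluating the action of each such monomial on $[\CO_{\bQG(w_n)}]$ via the right $\heis$-module structure of $K_{H\times\BC^*}(\bQGr)$. The starting point is the explicit definitions \eqref{E:tg-} and \eqref{E:tg+}, which present $\tg^\pm_{\mathbf{w}}$ as ordered products in $\heis$ with one factor per step $t=1,\ldots,n$ of the quantum walk $\mathbf{w}=(w_1,\ldots,w_n)\in\QWalk_{\lambda,w}$. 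A direct inspection of these formulas should show that precisely at the steps $t\in S(\mathbf{w})$ the factor is a difference of two terms, whereas at every other step (the Bruhat and quantum edges of $\QBG(W)$, and the stationary steps lying outside $S(\mathbf{w})$) the factor is a single monomial in $\heis$. Distributing the binomial factors then produces a sum indexed by maps $\mathbf{b}:S(\mathbf{w})\to\{0,1\}$, which is exactly the decoration set parametrizing $\DQWalk_{\lambda,w}$ in \S\ref{SS:DQW}.

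For each decorated walk $(\mathbf{w},\mathbf{b})$, I would rearrange the resulting monomial in $\heis$ into a canonical form $q^{d}\,t_{\xi}\,[\CO(\mu)]$ with $d\in\BZ$, $\xi\in Q^\vee$, and $\mu\in P$. The combinatorics of the $q$-Heisenberg commutation relation---commuting a line bundle twist past a translation produces a power of $q$ with an explicit exponent---is what generates the factor $q^{\deg(\mathbf{w},\mathbf{b})}$. Acting by this canonical monomial on $[\CO_{\bQG(w_n)}]$ yields a single twisted Schubert class $[\CO_{\bQG(w_n t_\xi)}(\mu)]$; a term-by-term comparison against the combinatorial definitions should then identify $\xi=-\lng\wt(\mathbf{w},\mathbf{b})$ and $\mu=-\lng w_l^{-1}\lambda+\wt(\mathbf{w},\mathbf{b})$, while the overall sign---coming from the binomial expansion together with any internal signs built into \eqref{E:tg-} and \eqref{E:tg+}---agrees with $(-1)^{(\mathbf{w},\mathbf{b})}$. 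Summing these contributions over $\mathbf{b}$, and then over $\mathbf{w}\in\QWalk_{\lambda,w}$, produces the right-hand side of \eqref{E:ithm2}.

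The main obstacle will be the noncommutative bookkeeping: tracking the cumulative power of $q$ acquired while commuting all translation operators past all line bundle twists into canonical form, and matching this power against the combinatorial quantity $\deg(\mathbf{w},\mathbf{b})$. A related subtlety is pinning down the role of $\lng$, which appears in both the translation part $-\lng\wt(\mathbf{w},\mathbf{b})$ and the line bundle part $-\lng w_l^{-1}\lambda$ of the final formula; this should ultimately reflect the natural twist built into the right $\heis$-action on Schubert classes in $K_{H\times\BC^*}(\bQGr)$ under the conventions of \cite{KNS,NOS,LNS,Orr}. A final layer of verification is confirming that at non-stationary steps of $\mathbf{w}$ the factors of $\tg^\pm_{\mathbf{w}}$ are indeed single monomials rather than genuine binomials, so that no spurious decorations arise outside $S(\mathbf{w})$; this should follow from case analysis of the explicit formulas \eqref{E:tg-}, \eqref{E:tg+} separated by edge type in $\QBG(W)$.
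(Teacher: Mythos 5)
Your proposal is correct and follows essentially the same route as the paper: Theorem~\ref{ithm2} is deduced from Theorem~\ref{ithm1} by expanding the binomial factors of $\tg^{\pm}_{\mathbf{w}}$ at the steps in $S(\mathbf{w})$ into $\{0,1\}$-valued decorations, normal-ordering the resulting $q$-Heisenberg monomials (translations to the left of line bundle twists), and acting on $[\CO_{\bQG(w_n)}]$ via Proposition~\ref{P:heis-act}. The paper's definitions of $(-1)^{(\mathbf{w},\mathbf{b})}$, $\wt(\mathbf{w},\mathbf{b})$, and $\deg(\mathbf{w},\mathbf{b})$ in \S\ref{SS:DQW} are designed precisely to record the bookkeeping you identify as the main obstacle, so the argument closes exactly as you outline.
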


Theorem~\ref{ithm2} is obtained as an immediate consequence of Theorem~\ref{ithm1}, by fully expanding the right-hand side of \eqref{E:ithm1} in the $q$-Heisenberg algebra. Our combinatorial framework is designed to record the terms in this expansion. We find it satisfying that the DAHA-based limit used to prove \eqref{E:ithm1} automatically manufactures the combinatorics of decorated quantum walks necessary for both formulas.
We mention that \eqref{E:ithm1} gives the extension to $K_{H\times\BC^*}(\bQG)$ of Lenart's rule \cite[Theorem 3.1]{L}, which holds in $K(SL(n+1)/B)$, for multiplying a Grothendieck polynomial by a variable. 

In Appendix~\ref{A:A}, we work out some further details in the type $A$ case, including an important special case of \eqref{E:ithm2} in \eqref{E:K-w0}. We also explain, following \cite[\S5.1]{Orr}, how to obtain the usual $q$-Toda difference operator by symmetrizing Theorem~\ref{ithm1}.

\subsection{Sequel.} In \cite{KNOS2}, we will
establish a different, equivalent version of our inverse Chevalley formula \eqref{E:ithm2} in terms of paths (instead of walks) in the quantum Bruhat graph. By means of this alternate formula, we will give a separate and logically independent proof of Theorem \ref{ithm2} in type $A$, based on the Chevalley formulas of \cite{KNS,NOS} and an equivalent set of character identities for Demazure submodules of level-zero extremal weight modules. Finally, in \cite{KNOS2}, we will use Theorem~\ref{ithm2} to derive a corresponding inverse Chevalley formula in the quantum $K$-ring $QK_H(G/B)$, by means of Kato's isomorphism.


\subsection*{Acknowledgements.}
The authors would like to thank Cristian Lenart and Mark Shimozono for helpful discussions.
T.K. was supported in part by Grant-in-Aid for JSPS Fellows 20J12058.
S.N. was supported in part by JSPS Grant-in-Aid for Scientific Research (B) 16H03920.
D.O. was supported in part by a Collaboration Grant for Mathematicians from the Simons Foundation.
D.S. was supported in part by JSPS Grant-in-Aid for Scientific Research (C) 19K03415.


\section{Basic notation.}\label{sec:pre}


\subsection{Root system.}
Let $G$ be a simply-connected simple algebraic group over $\BC$. As in the introduction, we fix a maximal torus and Borel subgroup $H\subset B\subset G$. Set $\Fg := \mathrm{Lie}(G)$ and $\Fh := \mathrm{Lie}(H)$.
We denote by $\pair{\cdot}{\cdot} : \Fh^{\ast} \times \Fh \rightarrow \BC$ 
the canonical pairing, where $\Fh^{\ast} = \mathrm{Hom}_{\BC}(\Fh, \BC)$. 

Let $\Delta \subset \Fh^{\ast}$ be the root system of $\Fg$, $\Delta^{+} \subset \Delta$ the positive roots (with respect to $B$), 
and $\{ \alpha_{i} \}_{i \in I} \subset \Delta^{+}$ the set of simple roots. 
We denote by $\alpha^{\vee} \in \Fh$ the coroot corresponding to $\alpha \in \Delta$. 
Also, we denote by $\theta \in \Delta^+$ the highest root of $\Delta$, and 
we set $\rho := (1/2) \sum_{\alpha \in \Delta^{+}} \alpha$. 
The root lattice $Q$ and the coroot lattice $Q^{\vee}$ of $\Fg$ are 
$Q := \sum_{i \in I} \BZ \alpha_{i}$ and $Q^{\vee} := \sum_{i \in I} \BZ \alpha_{i}^{\vee}$. 

For $i \in I$, let $\vpi_{i} \in \Fh^{\ast}$ be the fundamental weight determined by
$\pair{\vpi_{i}}{\alpha_{j}^{\vee}} = \delta_{i, j}$ for all $j \in I$, 
where $\delta_{i, j}$ denotes the Kronecker delta.
The weight lattice $P$ of $\Fg$ is defined by $P := \sum_{i \in I} \BZ \vpi_{i}$. 
We denote by $\BZ[P]$ the group algebra of $P$, that is, 
the associative algebra generated by formal elements $\{ e^{\lambda} \mid \lambda \in P\}$, 
where the product is defined by $e^{\lambda} e^{\mu} := e^{\lambda + \mu}$ for $\lambda, \mu \in P$. 

A reflection $s_{\alpha} \in GL(\Fh^{\ast})$, $\alpha \in \Delta$, 
is defined by $s_{\alpha}(\lambda) := \lambda - \pair{\lambda}{\alpha^{\vee}} \alpha$ 
for $\lambda \in \Fh^{\ast}$. We write $s_{i} := s_{\alpha_{i}}$ for $i \in I$. 
Then the Weyl group $W := \langle s_{i} \mid i \in I \rangle$ of $\Fg$ is the subgroup of $GL(\Fh^{\ast})$ generated by $\{ s_{i} \}_{i \in I}$.
We denote by $\ell(w)$ the length of $w\in W$ with respect to $\{ s_{i} \}_{i \in I}$.

\subsection{Quantum Bruhat graph.}\label{S:QBG}
The \emph{quantum Bruhat graph} $\QBG(W)$ (cf. \cite[Definition~6.1]{BFP})
is the $\Delta^+$-labeled directed graph whose vertices are the elements of $W$ 
and whose edges are of the following form: 
$x \xrightarrow{\alpha} y$, with $x, y \in W$ and $\alpha \in \Delta^+$, 
such that $y = x s_{\alpha}$ and either of the following holds: 
(B) $\ell(y) = \ell(x) + 1$, (Q) $\ell(y) = \ell(x) - 2\pair{\rho}{\alpha^\vee} + 1$. 
An edge satisfying (B) (resp. (Q)) is called a \emph{Bruhat edge} (resp. a \emph{quantum edge}).


\subsection{Affine root system.}
Let $\Fg_{\af} := (\Fg \otimes \BC[z, z^{-1}]) \oplus \BC c \oplus \BC d$ 
be the (untwisted) affine Lie algebra over $\BC$ associated to $\Fg$, 
where $c$ is the canonical central element and $d$ is the degree operator. 
Then $\Fh_{\af} := \Fh \oplus \BC c \oplus \BC d$ is the Cartan subalgebra of $\Fg_{\af}$. 
We denote by $\pair{\cdot}{\cdot} : \Fh_{\af}^{\ast} \times \Fh_{\af} \rightarrow \BC$ the canonical pairing.
Regarding $\lambda \in \Fh^{\ast}$ as 
$\lambda \in \Fh_{\af}^{\ast} = \mathrm{Hom}_{\BC}(\Fh_{\af}, \BC)$ 
by setting $\pair{\lambda}{c} = \pair{\lambda}{d} = 0$, 
we have $\Fh^{\ast} \subset \Fh_{\af}^{\ast}$. 
In this identification, we see that the canonical pairing $\pair{\cdot}{\cdot}$ 
on $\Fh_{\af}^{\ast} \times \Fh_{\af}$ 
extends that on $\Fh^{\ast} \times \Fh$. 

Let us consider the root system of $\Fg_{\af}$. 
We define $\delta$ to be the unique element of $\Fh_{\af}^{\ast}$ which satisfies
$\pair{\delta}{h} = 0$ for all $h \in \Fh$, $\pair{\delta}{c} = 0$, and $\pair{\delta}{d} = 1$.  
We set $\alpha_{0} := -\theta + \delta \in \Fh_{\af}^{\ast}$. 
Then the root system $\Delta_{\af}$ of $\Fg_{\af}$ 
has simple roots $\{ \alpha_{i} \}_{i \in I_{\af}}$, where $I_{\af} := I \sqcup \{ 0 \}$.

For each $\alpha \in \Delta_{\af}$, we have a reflection $s_{\alpha} \in GL(\Fh_{\af})$, defined as for $\Fg$. 
Note that for $\alpha \in \Delta \subset \Delta_{\af}$, 
the restriction of a reflection $s_{\alpha}$ defined on $\Fh_{\af}$ to $\Fh$ 
coincides with a reflection $s_{\alpha}$ defined on $\Fh$. 
Set $s_{i} := s_{\alpha_{i}}$ for $i \in I_{\af}$. 
Then, the Weyl group of $\Fg_{\af}$ (called the affine Weyl group) $W_{\af}$ is defined 
to be the subgroup of $GL(\Fh_{\af})$ generated by $\{ s_{i} \}_{i \in I_{\af}}$, 
namely, $W_{\af} = \langle s_{i} \mid i \in I_{\af} \rangle$. 
In \cite[\S 6.5]{Kac},
it is shown that $W_{\af} \simeq W \ltimes \{ t_{\alpha^{\vee}} \mid \alpha^{\vee} \in Q^{\vee} \} \simeq W \ltimes Q^{\vee}$, 
where $t_{\alpha^{\vee}}$ is the translation element corresponding to $\alpha^{\vee} \in Q^{\vee}$.

\subsection{Simply-laced assumption.}
While the definitions above are completely general, we will in fact assume throughout that $G$ is {\em simply-laced}. As a result, we almost always identify $\Fh^\ast$ with $\Fh$, roots with coroots, and so on. We do this by means of the non-degenerate $W$-invariant symmetric bilinear form $(\cdot,\cdot) : \Fh^* \times \Fh^* \to \BC$, normalized so that $(\alpha,\alpha)=2$ for all $\alpha\in\Delta$. We write $|\lambda|^2=(\lambda,\lambda)$ for $\lambda\in\Fh^*$.

\subsection{Extended affine Weyl group.} 
Let $W_{\mathrm{ex}}=W\ltimes P$ be the extended affine Weyl group, which we let act on $P\oplus\BZ\frac{\delta}{e}$ by the level-zero action: 
\begin{align}
wt_\mu (\lambda)=w(\lambda)-(\mu,\lambda)\delta,\qquad wt_\mu(\delta)=\delta.
\end{align}
Here we choose $e$ to be the smallest positive integer such that $e\cdot(P,P)\subset\mathbb{Z}$.

We also define the group $\Pi=P/Q$, which we realize as the subgroup of length zero elements in $W_{\mathrm{ex}}$.

\subsection{Parameters.}
Let us introduce a parameter $q^{1/e}$ such that $(q^{1/e})^e=q$, where $q\in R(\BC^*)$ is the equivariant parameter corresponding to loop rotation on $\bQGr$. Below we will also use a related, but distinct parameter $\mathsf{q}^{1/e}$, as well as parameters $\mathsf{t},\mathsf{v}$ such that $\mathsf{t}=\mathsf{v}^2$. Our base field for DAHA constructions will be $\kqt=\BQ(\mathsf{q}^{1/e},\mathsf{v})$.

\subsection{Matrices.}\label{S:mat}
For any ring $R$ with $1$, let $\Mat_W(R)$ denote the $R$-algebra of $W\times W$ matrices with entries in $R$. Let $\{e_w\}_{w\in W}$ be the standard basis of the free $R$-module $R^{|W|}$. For $A\in\Mat_W(R)$, we denote by $A_{w,\bullet}=\sum_{v\in W}A_{w,v}e_v$ the row of the matrix $A$ indexed by $w\in W$.


\section{Inverse Chevalley formula via DAHA.}\label{S:DAHA}

In this section we use the methods of \cite{Orr}, based on the $(\HH_0,\heis)$-bimodule structure of $\Kr$, to prove the inverse Chevalley formulas \eqref{E:ithm1} and \eqref{E:ithm2}.

\subsection{$K$-groups.}

Let $K_{\tilde{\mathbf{I}}}(\bQGr)$ be the equivariant $K$-group of $\bQGr$ introduced in \cite[\S 6]{KNS}, where $\tilde{\mathbf{I}}=\mathbf{I}\rtimes\BC^*$ is semi-direct product of the Iwahori subgroup $\mathbf{I}\subset G(\BC\bra{z})$ and loop rotation $\BC^*$. Correspondingly, $K_{\tilde{\mathbf{I}}}(\bQGr)$ is a module over $\BZ[P]\pra{q^{-1}}$, which acts by equivariant scalar multiplication. 

One has the following classes in $K_{\tilde{\mathbf{I}}}(\bQGr)$, for each $x\in W_\af$ and $\lambda\in P$:
\begin{itemize}
\item Schubert classes $[\CO_{\bQG(x)}]$, 
\item equivariant line bundle classes $[\CO(\lambda)]$,
\item classes $[\CO_{\bQG(x)}(\lambda)]$ corresponding to the tensor product sheaves $\CO_{\bQG(x)}\otimes\CO(\lambda)$.
\end{itemize}
We follow the conventions of \cite{KNS} for indexing equivariant line bundles and Schubert varieties in $\bQGr$. 

\begin{dfn}[$(H\times\BC^*)$-equivariant $K$-groups of $\bQGr$ and $\bQG$]
Let $K_{H\times\BC^*}(\bQGr)$ be the $\BZ[q^{\pm 1}][P]$-submodule of $K_{\tilde{\mathbf{I}}}(\bQGr)$ consisting of all convergent infinite $\BZ[q^{\pm 1}][P]$-linear combinations of Schubert classes $[\CO_{\bQG(x)}]$ for $x\in W_\af$, where convergence holds in the sense of \cite[Proposition 5.8]{KNS}. 

Similarly, we define $K_{H\times\BC^*}(\bQG)$ to be the $\BZ[q^{\pm 1}][P]$-submodule of $K_{\tilde{\mathbf{I}}}(\bQGr)$ consisting of all convergent infinite $\BZ[q^{\pm 1}][P]$-linear combinations of Schubert classes $[\CO_{\bQG(x)}]$ for $x\in W_\af^{\ge 0}$. 
\end{dfn}

The classes $\{[\CO_{\bQG(x)}]\}_{x\in W_\af}$ satisfy a notion of topological linear independence in $K_{\tilde{\mathbf{I}}}(\bQGr)$ given by \cite[Proposition 5.8]{KNS}; thus they form a topological $\BZ[q^{\pm 1}][P]$-basis of $K_{H\times\BC^*}(\bQGr)$. Also, one has $[\CO_{\bQG(x)}(\lambda)]\in K_{H\times\BC^*}(\bQGr)$ for any $x\in W_\af$ and $\lambda\in P$, thanks to the Chevalley formula of \cite{KNS}. Similar (in fact, equivalent) assertions hold for $K_{H\times\BC^*}(\bQG)$.

\begin{dfn}
Define $\mathbb{K}\subset \Kr$ to be the $\BZ[q^{\pm 1}]$-submodule consisting of all finite $\BZ[q^{\pm 1}]$-linear combinations of the classes $\{[\CO_{\bQG(x)}(\lambda)]\}_{x\in W_\af,\lambda\in P}$.
\end{dfn}

By definition, $\mathbb{K}$ is only a $\BZ[q^{\pm 1}]$-submodule of $\Kr$. But, as shown in \cite[Theorem 5.1]{Orr}, $\mathbb{K}$ is indeed a $\BZ[q^{\pm 1}][P]$-submodule of $\Kr$. We note that the classes $\{[\CO_{\bQG(x)}(\lambda)]\}_{x\in W_\af,\lambda\in P}$ are linearly independent over $\BZ[q^{\pm 1}]$, again by the Chevalley formula of \cite{KNS}.

To summarize, we have the following chain of $\BZ[q^{\pm 1}][P]$-modules (and $K_{\tilde{\mathbf{I}}}(\bQGr)$ is in fact a $\BZ[P]\pra{q^{-1}}$-module):
$$ \mathbb{K}\subset \Kr\subset K_{\tilde{\mathbf{I}}}(\bQGr). $$
 Next we discuss additional structures on these modules.

\subsubsection{Heisenberg.}
Let $\hHeis$ be the $q$-Heisenberg algebra defined as a $\BZ[q^{\pm 1/e}]$-algebra by generators
\begin{align}
X^\nu\ (\nu\in P),\quad t_\mu\ (\mu\in P)
\end{align}
and relations 
\begin{align}
&X^\lambda X^\nu = X^{\lambda+\nu}\\
&t_\mu t_\xi = t_{\mu+\xi}\\
&X^0 = t_0 = 1\\
&X^\nu t_\mu = q^{(\mu,\nu)}t_\mu X^\nu.
\end{align}
for all $\lambda,\mu,\nu,\xi\in P$.

Let $\heis\subset\hHeis$ be the $\BZ[q^{\pm 1}]$-subalgebra generated by
$X^\nu\ (\nu\in P)$ and $t_\beta\ (\beta\in Q)$.

\begin{prop}\label{P:heis-act}
There exists a unique right $\heis$-module structure on $\mathbb{K}$ and $\Kr$ such that:
\begin{align}
[\CO_{\bQG(x)}(\lambda)]\cdot X^\nu &= [\CO_{\bQG(x)}(\lambda+\nu)]\\
[\CO_{\bQG(x)}(\lambda)]\cdot t_\beta &= q^{(\beta,\lambda)}\cdot[\CO_{\bQG(xt_{-w_\circ\beta})}(\lambda)]
\end{align}
for all $x\in W_\af$, $\lambda,\nu\in P$, and $\beta\in Q$. Moreover, $\mathbb{K}$ is free as an $\heis$-module, with $\heis$-basis given by $\{[\CO_{\bQG(w)}]\}_{w\in W}$.
\end{prop}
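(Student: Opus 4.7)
The plan is to verify the four defining relations of $\heis$ on the spanning set $\{[\CO_{\bQG(x)}(\lambda)]\}_{x\in W_\af,\lambda\in P}$ of $\mathbb{K}$, then to extend the action continuously to $\Kr$, and finally to derive the freeness statement using the factorization $W_\af = W\ltimes Q^\vee$ together with a monomial basis of $\heis$. Uniqueness is immediate: since the classes $[\CO_{\bQG(x)}(\lambda)]$ span $\mathbb{K}$ over $\BZ[q^{\pm 1}]$, the right action on $\mathbb{K}$ is forced by the formulas in the statement, and extension to $\Kr$ is then forced by the topological basis property of the Schubert classes $\{[\CO_{\bQG(x)}]\}_{x\in W_\af}$ from \cite[Proposition~5.8]{KNS}.

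For existence, I verify the four relations of $\heis$ by acting on $[\CO_{\bQG(x)}(\lambda)]$ directly. The identity $X^\lambda X^\nu = X^{\lambda+\nu}$ follows from additivity of line bundle twists. The identity $t_\mu t_\xi = t_{\mu+\xi}$ follows from composing translations, together with $(\mu,\lambda)+(\xi,\lambda) = (\mu+\xi,\lambda)$ in the exponent of $q$. The commutation relation $X^\nu t_\mu = q^{(\mu,\nu)} t_\mu X^\nu$ reduces, after applying both sides to $[\CO_{\bQG(x)}(\lambda)]$, to the identity $(\mu,\lambda+\nu) = (\mu,\lambda)+(\mu,\nu)$ appearing in the $q$-exponents, while the normalizations $X^0 = t_0 = 1$ are immediate. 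To promote the action from $\mathbb{K}$ to $\Kr$, I check that convergent sums remain convergent: the operator $t_\beta$ merely shifts the index $x\mapsto x t_{-\lng\beta}$ and rescales by $q^{(\beta,\lambda)}$, preserving convergence directly from the filtration used in \cite[Proposition~5.8]{KNS}, while the operator $X^\nu$ sends $[\CO_{\bQG(x)}]$ to $[\CO_{\bQG(x)}(\nu)] \in \Kr$ by the Chevalley formula of \cite{KNS}, whose explicit form also implies uniform convergence on infinite sums.

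For freeness, I use the unique factorization $x = wt_\xi$ with $w\in W$ and $\xi\in Q^\vee$. In ADE type, $Q^\vee$ is identified with $Q$ via the normalized form, so setting $\beta = -\lng\xi \in Q$ and using $\lng^2 = 1$, a direct calculation yields
\[
[\CO_{\bQG(w)}]\cdot t_\beta X^\lambda = [\CO_{\bQG(wt_\xi)}(\lambda)],
\]
showing that $\{[\CO_{\bQG(w)}]\}_{w\in W}$ generates $\mathbb{K}$ as an $\heis$-module. Linear independence over $\heis$ then follows by comparing with the $\BZ[q^{\pm 1}]$-basis $\{[\CO_{\bQG(x)}(\lambda)]\}_{x\in W_\af,\lambda\in P}$ of $\mathbb{K}$ (furnished by the Chevalley formula of \cite{KNS}) together with the monomial $\BZ[q^{\pm 1}]$-basis $\{t_\beta X^\lambda\}_{\beta\in Q,\lambda\in P}$ of $\heis$. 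The main obstacle I anticipate is the careful bookkeeping with the $\lng$ twists and the identification $Q^\vee \cong Q$, but once the conventions are fixed, each verification above reduces to a short algebraic manipulation.
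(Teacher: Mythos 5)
Your proposal is correct, but it takes a genuinely different route from the paper: the paper does not verify anything itself, it simply cites \cite{Kat1} and \cite[Proposition~2.3]{Orr} and remarks that the $-w_\circ$ twist in the $t_\beta$-action comes from a change of indexing conventions relative to \cite{Orr}. Your direct verification is the content of those cited results, redone in the present conventions, and it is sound: the four Heisenberg relations do reduce to bilinearity of $(\cdot,\cdot)$ exactly as you say (well-definedness of the operators resting on the $\BZ[q^{\pm1}]$-linear independence of the classes $[\CO_{\bQG(x)}(\lambda)]$, which the paper extracts from the Chevalley formula of \cite{KNS}), and your freeness argument via the bijection $(w,\beta,\lambda)\mapsto [\CO_{\bQG(w)}]\cdot t_\beta X^\lambda=[\CO_{\bQG(wt_{-w_\circ\beta})}(\lambda)]$ matching the monomial basis $\{t_\beta X^\lambda\}$ of $\heis$ against the $\BZ[q^{\pm1}]$-basis of $\mathbb{K}$ is exactly right. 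What your approach buys is self-containedness; what the paper's citation buys is that the two genuinely nontrivial analytic points are outsourced rather than asserted. Those points are: (i) for the action on $\Kr$, applying $X^\nu$ termwise to a convergent infinite sum replaces each Schubert class by an infinite Chevalley expansion, and the interchange of summations needs the support/convergence control of \cite[Proposition~5.8]{KNS} (and, for non-dominant $\nu$, the general Chevalley formula), which you gesture at but do not establish; and (ii) uniqueness on $\Kr$ requires continuity (compatibility with convergent sums) as an implicit hypothesis, since the classes $[\CO_{\bQG(x)}(\lambda)]$ only span $\Kr$ topologically. Neither is a gap in substance --- both are exactly what \cite{Kat1} and \cite{Orr} supply --- but a fully self-contained write-up would have to address them rather than appeal to ``uniform convergence'' in passing.
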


\begin{proof}
See \cite{Kat1} and \cite[Proposition 2.3]{Orr}.
We note that our conventions here differ slightly from \cite{Orr}, where the class $[\CO_{\bQG(x)}]$ is denoted by $[\overline{\CO}_{x}]$. This accounts for the twist by $-w_\circ$ in the action of $t_\beta$.
\end{proof}

\subsubsection{nil-DAHA.}

Next we turn to the nil-DAHA $\HH_0$, which is the $\BZ[\mathsf{q}^{\pm 1}]$-algebra (note the notational distinction between $\mathsf{q}$ and $q$) defined by generators
\begin{align}
T_i \ (i\in I_\af),\quad X^{\nu} \ (\nu\in P)
\end{align}
and relations
\begin{align}
&T_i T_j\dotsm = T_j T_i\dotsm \qquad\text{($m_{ij}=|s_is_j|$ factors on both sides)}\\
&T_i(T_i+1) = 0\\
&X^0=1\\
&X^{\nu}X^{\mu}=X^{\nu+\mu}\\
&T_i X^{\nu} = X^{s_i(\nu)}T_i-\frac{X^{\nu}-X^{s_i(\nu)}}{1-X^{\alpha_i}} 
\end{align}
where it is understood that $X^\delta=\mathsf{q}$.  We also use $D_i=1+T_i \ (i\in I_\af)$. These elements satisfy the braid relations as above and quadratic relations $D_i^2=D_i$.

\begin{prop}[{\cite[Proposition 6.4]{KNS}}]\label{P:daha-act}
There is a left $\HH_0$-action on $\Kr$ uniquely determined by
\begin{align}
\mathsf{q}\cdot [\CO_{\bQG(t_\alpha)}(\lambda)] &= q^{-1}[\CO_{\bQG(t_\alpha)}(\lambda)]\\
X^{\nu}\cdot[\CO_{\bQG(t_\alpha)}(\lambda)] &= e^{-\nu}[\CO_{\bQG(t_\alpha)}(\lambda)]\\
D_i\cdot [\CO_{\bQG(t_\alpha)}(\lambda)] &= [\CO_{\bQG(t_\alpha)}(\lambda)]\\
D_0\cdot [\CO_{\bQG(t_\alpha)}(\lambda)] &= [\CO_{\bQG(s_0t_\alpha)}(\lambda)]
\end{align}
for all $i\in I$, $\alpha\in Q$, and $\nu,\lambda\in P$. 
\end{prop}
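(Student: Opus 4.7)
The plan is to construct the $\HH_0$-action on $\Kr$ geometrically, verify the defining DAHA relations, and then deduce uniqueness via a generation argument from the translation classes.

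For existence, let $\mathsf{q}$ act by multiplication by $q^{-1}$ and each $X^{\nu}$ by multiplication by $e^{-\nu}$; these pairwise commute and satisfy the prescribed formulas on translation classes by definition. For each $i \in I_\af$, define $D_i$ as a Demazure-type push-pull operator along the $\mathbf{I}$-equivariant $\mathbb{P}^1$-fibration on $\bQGr$ corresponding to the simple affine root $\alpha_i$. The standard formalism of Demazure operators in equivariant $K$-theory then yields $D_i^2 = D_i$ (equivalently $T_i(T_i+1)=0$) and the braid relations among the $T_i$. The prescribed formulas on translation classes $[\CO_{\bQG(t_\alpha)}(\lambda)]$ follow from the conventions of \cite{KNS}: the finite simple parahoric stabilizes $\bQG(t_\alpha)$ in the relevant sense, forcing $D_i \cdot [\CO_{\bQG(t_\alpha)}(\lambda)] = [\CO_{\bQG(t_\alpha)}(\lambda)]$ for $i\in I$, while $D_0$ sends the class to $[\CO_{\bQG(s_0 t_\alpha)}(\lambda)]$ via the length-increasing step $t_\alpha \to s_0 t_\alpha$.

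The principal obstacle is verifying the Lusztig cross relation
$T_iX^\nu = X^{s_i(\nu)}T_i - (X^\nu - X^{s_i(\nu)})/(1 - X^{\alpha_i})$,
which couples the Demazure operators with equivariant scalar multiplication. The approach is to reduce this to a local computation on each $\mathbb{P}^1$-fiber of the corresponding fibration, where the identity becomes the classical one describing the interaction of Demazure operators and equivariant characters in the $T$-equivariant $K$-theory of $\mathbb{P}^1$. Globalization by push-pull functoriality then yields the relation on all of $\Kr$. This step is the main technical hurdle because it is the only relation that cannot be checked "one generator at a time" but genuinely mixes two kinds of structure.

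For uniqueness, observe that any Schubert class $[\CO_{\bQG(x)}(\lambda)]$ with $x \in W_\af$ can be obtained from the identity translation class $[\CO_{\bQG(e)}(\lambda)] = [\CO_{\bQG(t_0)}(\lambda)]$ by applying $D_{i_k}\dotsm D_{i_1}$ for a reduced expression $x = s_{i_1}\dotsm s_{i_k}$ in the generators $\{s_i\}_{i \in I_\af}$, using that length-increasing applications of $D_i$ produce the Schubert class of the new element. Proceeding by induction on $\ell(x)$ and normal-ordering compositions of $\HH_0$-generators using the braid and Lusztig cross relations, one sees that the prescribed formulas on translation classes, together with the $\HH_0$-module structure, determine the action on every Schubert class $[\CO_{\bQG(x)}(\lambda)]$. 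Combined with the topological linear independence of $\{[\CO_{\bQG(x)}(\lambda)]\}_{x\in W_\af,\lambda\in P}$ from \cite[Proposition 5.8]{KNS}, this forces any two $\HH_0$-actions satisfying the given formulas to agree on all of $\Kr$.
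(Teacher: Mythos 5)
The paper offers no proof of this statement: it is quoted verbatim from \cite{KNS} (Proposition 6.4 there), so there is no internal argument to compare against, and your sketch must stand on its own. It has two genuine gaps. First, the uniqueness argument rests on a false generation claim: you assert that every $[\CO_{\bQG(x)}(\lambda)]$, $x\in W_\af$, is obtained from the single class $[\CO_{\bQG(e)}(\lambda)]$ by applying $D_{i_k}\dotsm D_{i_1}$ along a reduced expression of $x$, with ``length-increasing'' steps producing new Schubert classes. The relevant order is the \emph{semi-infinite} Bruhat order, not affine length: by \eqref{E:D-schub}, $D_i$ replaces $[\CO_{\bQG(x)}]$ by $[\CO_{\bQG(s_ix)}]$ precisely when $s_ix\prec x$ (and otherwise fixes the class), so the classes reachable from $[\CO_{\bQG(e)}]$ lie in the downward cone of $e$ under $\prec$, which is a proper subset of $W_\af$; in particular not every translation class $[\CO_{\bQG(t_\alpha)}(\lambda)]$, $\alpha\in Q$, is reachable this way. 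This is exactly why the proposition prescribes the action on \emph{all} translation classes: the $t_\alpha$ are maximal with respect to the finite simple reflections ($s_it_\alpha\succ t_\alpha$ for $i\in I$), and every $x\in W_\af$ is reached from some $t_\alpha$ by a downward chain of semi-infinite covers. As written, your argument does not determine the action on a general Schubert class.

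Second, for existence you defer the only hard relation---the Bernstein--Lusztig cross relation between $T_i$ and $X^\nu$---to ``a local computation on each $\mathbb{P}^1$-fiber'' followed by ``globalization by push-pull functoriality.'' For $\bQGr$ this is precisely what is unavailable: as the introduction of this paper notes, localization and the standard finite-type push-pull formalism do not apply to the semi-infinite flag manifold, and supplying substitutes (normality and the cohomological properties of the quasi-map compactifications, convergence of the infinite expansions, and the explicit Chevalley-type computations) is the substantive content of \cite{KNS}. So the ``main technical hurdle'' you correctly identify is resolved in your sketch only by assuming the very tools whose absence makes the statement nontrivial. A correct short treatment here is simply to cite \cite{KNS}; a self-contained proof would have to engage with that machinery.
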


We warn the reader that $X^\nu\ (\nu\in P)$ can be viewed both as an element of $\HH_0$ and as an element of $\heis$. Its left and right actions on $\Kr$ differ drastically. From the left, $X^\nu\in\HH_0$ acts as equivariant scalar multiplication by $e^{-\nu}$, while from the right, $X^\nu\in\heis$ acts as the line bundle twist by $\CO(\nu)$.

\subsubsection{$\mathbb{K}$ as a bimodule.}
By \cite[Theorem 5.1]{Orr}, the $\HH_0$-action on $\Kr$ leaves $\mathbb{K}$ stable. Thus $\mathbb{K}$ is an $(\HH_0,\heis)$-bimodule. Since $\mathbb{K}$ is free as a right $\heis$-module, the nil-DAHA action on $\mathbb{K}$ (hence on $\Kr$) is therefore characterized by the unique ring homomorphism $\varrho_0 : \HH_0 \to \Mat_W(\heis)$ satisfying
\begin{align}\label{E:rho-0}
H\cdot [\CO_{\bQG(w)}] = \sum_{v\in W}[\CO_{\bQG(v)}]\cdot \varrho_0(H)_{vw}
\end{align}
for all $H\in \HH_0$ and $w\in W$.

For us, the key point is that the matrices encoding the inverse Chevalley formula \eqref{E:inv-chev} are exactly $\varrho_0(X^{-\lambda})$ for $\lambda\in P$.

\subsubsection{Further observations.}\label{SS:further-obs}

For any $i\in I_\af$ and $x\in W_\af$, we have (see \cite{KNS} or \cite{Orr}):
\begin{align}\label{E:D-schub}
D_i\cdot [\CO_{\bQG(x)}] &=
\begin{cases}
[\CO_{\bQG(s_ix)}] &\text{if $s_ix\prec x$}\\
[\CO_{\bQG(x)}] &\text{if $s_ix\succ x$}\\
\end{cases}
\end{align}
where $\prec$ is the semi-infinite Bruhat order on $W_\af$.
A special case of \eqref{E:D-schub} is
\begin{align}\label{E:D-fin-schub}
D_i\cdot[\CO_{\bQG(w)}]=
\begin{cases}
[\CO_{\bQG(s_iw)}] &\text{if $s_iw<w$}\\
[\CO_{\bQG(w)}] &\text{if $s_iw>w$}
\end{cases}
\end{align}
for any $i\in I$ and $w\in W$, where $<$ is the Bruhat order on $W$. 

It follows from \eqref{E:D-fin-schub} and Proposition~\ref{P:daha-act} (see also \cite[Proposition 6.2]{KNS}) that the nil affine Hecke $\BZ[\mathsf{q}^{\pm 1}]$-subalgebra $\mathcal{H}_0\subset\HH_0$ generated by 
\begin{align*}
D_i \ (i\in I),\quad X^{\nu} \ (\nu\in P)
\end{align*}
leaves $K_{H\times \BC^*}(\bQG)\subset\Kr$ stable. (Our inverse Chevalley formulas make this explicit.)

We also deduce from \eqref{E:D-fin-schub} that $[\CO_{\bQG(w_\circ)}]$ generates $\mathbb{K}$ as an $(\mathcal{H}_0,\heis)$-bimodule (in fact, the action of $\heis$ and the $D_i$ for $i\in I$ on $[\CO_{\bQG(w_\circ)}]$ is sufficient to generate all of $\mathbb{K}$).

\subsection{DAHA.} \label{SS:DAHA}

The main result of \cite{Orr} gives an algebraic construction of the homomorphism $\varrho_0 : \HH_0\to\Mat_W(\heis)$ of \eqref{E:rho-0}, as a $\mathsf{t}\to 0$ limit from the polynomial representation of DAHA. Let us recall the necessary details of this construction.

\subsubsection{Definition of DAHA.}

Recall that $\kqt=\BQ(\mathsf{q}^{1/e},\mathsf{v})$ where $\mathsf{t}=\mathsf{v}^2$. The (extended) DAHA $\hHH$ is the $\kqt$-algebra defined by generators
\begin{align}
T_i\ (i\in I_\af),\quad X^\nu\ (\nu\in P),\quad \pi\ (\pi\in \Pi)
\end{align}
and relations
\begin{align}
&T_i T_j\dotsm = T_j T_i \dotsm  \qquad\text{with $m_{ij}=|s_is_j|$ factors on both sides}\\
&(T_i-\mathsf{t})(T_i+1) = 0\\
&X^\nu X^\mu=X^{\nu+\mu}\\
&X^0=1\\
&T_i X^\nu = X^{s_i(\nu)} T_i + (\mathsf{t}-1)(1-X^{\alpha_i})^{-1}(X^\nu-X^{s_i(\nu)})\label{E:X-bernstein}\\
&\pi T_i \pi^{-1} = T_j \qquad\text{where $\pi(\alpha_i)=\alpha_j$}\\
&\pi X^\nu \pi^{-1} = X^{\pi(\nu)}
\end{align}
for all $i,j\in I_\af$, $\nu,\mu\in P$, and $\pi\in \Pi$. Here it is understood that $X^{\delta/e}=\mathsf{q}^{1/e}$.

One has well-defined elements $Y^\nu\in\hHH\ (\nu\in P)$ given by
\begin{align}
Y^\nu = \mathsf{v}^{-\sum_{k=1}^\ell \epsilon_k}\pi T_{i_1}^{\epsilon_1}\dotsc T_{i_\ell}^{\epsilon_\ell}
\end{align}
for any reduced expression $t_\nu = \pi s_{i_1}\dotsc s_{i_\ell}\in P\rtimes W=W_{\mathrm{ex}}$, where  $i_k\in I_\af$ and $\pi\in\Pi$, and where $\epsilon_k\in\{\pm 1\}$ are determined by
\begin{align}
\epsilon_k = \begin{cases} +1 &\text{if $\pi s_{i_1}\dotsm s_{i_{k-1}}(\alpha_{i_k})\in-\Delta^++\BZ\delta$}\\-1 &\text{if $\pi s_{i_1}\dotsm s_{i_{k-1}}(\alpha_{i_k})\in\Delta^++\BZ\delta$}.\end{cases}
\end{align}
These elements satisfy
\begin{align}
&Y^\nu Y^\mu=Y^{\nu+\mu}\\
&Y^0=1\\
&T_i Y^\nu - Y^{s_i(\nu)} T_i = (\mathsf{t}-1)(1-Y^{-\alpha_i})^{-1}(Y^\nu-Y^{s_i(\nu)})\label{E:T-Y}
\end{align}
for any $i\in I$ and $\nu,\mu\in P$. (To extend the latter relation to $i\in I_\af$, see, e.g., \cite[\S3.2]{Orr}.)

\subsubsection{Polynomial representation.}

Let $\kqt(P)$ be the field of fractions of $\kqt[P]=\kqt[X^\nu : \nu\in P]$. The extended affine Weyl group $W_{\mathrm{ex}}$ acts on $\kqt[P]$ by automorphisms as follows: $t_\mu w(X^\nu)=q^{-(\mu,w\nu)}X^{w\nu}$. By a difference-reflection operator, we mean an element of the smash product
\begin{align*}
\kqt(P)\rtimes W_{\mathrm{ex}}\cong\kqt(P)\otimes_\kqt \kqt[W_{\mathrm{ex}}] \quad \text{($\kqt$-linear isomorphism)}
\end{align*}
acting from the left on $\kqt(P)$ by multiplication operators (first tensor factor) and the induced action of $W_{\mathrm{ex}}$ (second tensor factor). We note that $\kqt[P]\rtimes P$, where $P\cong\{t_\mu: \mu\in P\}\subset W_{\mathrm{ex}}$, is nothing but the  $q$-Heisenberg algebra $\hHeis$, with scalars extended to $\kqt$.

The polynomial representation $\varrho : \hHH\to\mathrm{End}(\kqt[P])$ is a faithful representation of $\hHH$ given as follows: $\varrho(X^\nu)$ is multiplication by $X^\nu$, the elements $\pi\in \Pi\subset W_{\mathrm{ex}}$ act by the automorphisms above, and $T_i\ (i\in I_\af)$ act by Demazure-Lusztig operators:
\begin{align}\label{E:DL}
\varrho(T_i) &= \frac{\mathsf{t}X^{\alpha_i}-1}{X^{\alpha_i}-1}s_i+\frac{1-\mathsf{t}}{X^{\alpha_i}-1}.
\end{align}

In general, elements of $\hHH$ act on $\kqt[P]$ by difference-reflection operators from $\kqt(P)\rtimes W_{\mathrm{ex}}$; as such, they are represented uniquely as sums of $ft_\mu u = f\otimes t_\mu u$, where $f\in\kqt(P)$ and $t_\mu u\in W_{\mathrm{ex}}$. Obviously, not all elements of $\kqt(P)\rtimes W_{\mathrm{ex}}$ leave $\kqt[P]$ stable, but those from $\hHH$ do. As in \eqref{E:DL}, we will simultaneously regard $\varrho(H)$ for $H\in\hHH$ as an element of $\mathrm{End}(\kqt[P])$ and as a difference-reflection operator, i.e., an element of $\kqt(P)\rtimes W_{\mathrm{ex}}$.

\subsubsection{Limiting procedure.}
Consider the homomorphism
\begin{align*}
\varkappa : \kqt(P)\rtimes W_{\mathrm{ex}} \to \Mat_W(\kqt(P)\rtimes P)
\end{align*}
which for $ft_\mu u\in\kqt(P)\rtimes W_{\mathrm{ex}}$ and $v,w\in W$ is given by
\begin{align}\label{E:kappa}
\varkappa(f t_\mu u)_{vw}=
\begin{cases}
(v^{-1}\cdot f)\big|_{X^\nu\mapsto \mathsf{t}^{-(\rho,\nu)}X^\nu}\,\mathsf{t}^{(\rho,v^{-1}\mu)}t_{v^{-1}(\mu)} &\text{if $v=uw$}\\
0 &\text{otherwise.}
\end{cases}
\end{align}
By \cite[Theorem 5.1]{Orr}, the sought-after matrices $\varrho_0(X^{-\lambda})$ from \eqref{E:rho-0} can be obtained by applying a simple automorphism to the result of the following entrywise limit:
\begin{align}\label{E:Y-lim}
\varrho'_0(Y^\lambda):=\lim_{\mathsf{v}\to 0} \varkappa(\varrho(Y^{\lambda}))\in\Mat_W(\heis).
\end{align}
The existence of this limit for any $\lambda\in P$ is ensured by \cite[Theorem 4.4]{CO}. The significance of $Y^{\lambda}$ here is that it is the image of $X^{-\lambda}$ under the DAHA duality anti-involution; see \cite[\S3.2]{Orr} for further details.

\subsection{Computing limits.}

Our goal is thus to compute certain $\varrho'_0(Y^\lambda)$ given by \eqref{E:Y-lim}. In order to do so, we will work with compositions of the following auxiliary difference-reflection operators
\begin{align}
\sG_\eta^\pm &= \frac{\mathsf{t}^{\pm 1}-X^{\eta}}{1-X^{\eta}}+\frac{\mathsf{t}^{\pm 1}-1}{1-X^{-\eta}}\, s_\eta
\end{align}
for $\eta\in\Delta^+_\af$. These are cousins of the Demazure-Lusztig operators $\varrho(T_i)$:
\begin{align*}
\sG^+_{\alpha_i} &= s_i\varrho(T_i), \qquad
\sG^-_{\alpha_i} = (\sG^+_{\alpha_i})^{-1}= \varrho(T_i^{-1})s_i.
\end{align*}

As we wish to take $\mathsf{v}\to 0$, all rational functions in such operators will be expanded in $\mathbb{Z}[\mathsf{q}^{\pm 1/e}][P]\pra{\mathsf{v}}$. In this context, we shall write $A\approx B$ to mean that $A$ and $B$ have the same lowest term, i.e., that $A$ and $B$ have the same order with respect to $\mathsf{v}$ and that $A-B$ has strictly greater order than that of $A$ and $B$.

Let $\eta\in\Delta^+$. By \eqref{E:kappa}, we see that a matrix entry of $\varkappa(\sG_\eta^\pm)$ vanishes unless it is indexed by $(w,w)$ or $(w,s_\eta w)$ for some $w\in W$. The nonvanishing entries are given as follows:
\begin{align}
\varkappa(\sG_\eta^+)_{w,w} &= 
\frac{\mathsf{t}-\mathsf{t}^{-(\rho,w^{-1}\eta)}X^{w^{-1}(\eta)}}{1-\mathsf{t}^{-(\rho,w^{-1}\eta)}X^{w^{-1}(\eta)}}
\approx
\begin{cases}
1 & \text{if $w^{-1}\eta>0$}\\
\mathsf{t}(1-X^{w^{-1}\eta}) & \text{if $(\rho,w^{-1}\eta)=-1$}\\
\mathsf{t} & \text{if $(\rho,w^{-1}\eta)<-1$}
\end{cases}\\
%
\varkappa(\sG_\eta^+)_{w,s_\eta w} &= 
\frac{\mathsf{t}-1}{1-\mathsf{t}^{(\rho,w^{-1}\eta)}X^{-w^{-1}\eta}}
\approx
\begin{cases}
-1 &\text{if $w^{-1}\eta>0$}\\
\mathsf{t}^{-(\rho,w^{-1}\eta)}X^{w^{-1}\eta} & \text{if $w^{-1}\eta<0$}
\end{cases}\\
%
\varkappa(\sG_\eta^-)_{w,w} &= 
\frac{\mathsf{t}^{-1}-\mathsf{t}^{-(\rho,w^{-1}\eta)}X^{w^{-1}(\eta)}}{1-\mathsf{t}^{-(\rho,w^{-1}\eta)}X^{w^{-1}(\eta)}}
\approx
\begin{cases}
1-X^{-w^{-1}\eta} & \text{if $(\rho,w^{-1}\eta)=1$}\\
1 & \text{if $(\rho,w^{-1}\eta)>1$}\\
\mathsf{t}^{-1} & \text{if $w^{-1}\eta<0$}
\end{cases}\\
%
\varkappa(\sG_\eta^-)_{w,s_\eta w} &= 
\frac{\mathsf{t}^{-1}-1}{1-\mathsf{t}^{(\rho,w^{-1}\eta)}X^{-w^{-1}\eta}}\approx
\begin{cases}
\mathsf{t}^{-1} &\text{if $w^{-1}\eta>0$}\\
-\mathsf{t}^{-(\rho,w^{-1}\eta)-1}X^{w^{-1}\eta} & \text{if $w^{-1}\eta<0$}.
\end{cases}
\end{align}

\begin{rem}
These computations are closely related to those of \cite[\S4]{BFP} involving quantum Bruhat operators. The precise connection to the quantum Bruhat graph will be made below.
\end{rem}

\subsubsection{Expanding products as sums over walks.} 

Using the vanishing of the matrix coefficients above, we obtain the following row expansion (see \S\ref{S:mat}) for any $w\in W$ and any sequence $\vec{\eta}=(\eta_1,\dotsc,\eta_n)\in(\Delta^+)^n$:
\begin{align}\label{E:G-matrix}
\varkappa(\sG_{\eta_1}^\pm\sG_{\eta_2}^\pm\dotsm \sG_{\eta_n}^\pm)_{w,\bullet}
&=\sum_{\mathbf{w}\in\WalkStd}\varkappa(\sG_{\eta_1}^\pm)_{w_0,w_1}\varkappa(\sG_{\eta_2}^\pm)_{w_1,w_2}\dotsm\varkappa(\sG_{\eta_n}^\pm)_{w_{n-1},w_n}e_{w_n},
\end{align}
where $w_0=w$ and $\WalkStd$ is the set of sequences $\mathbf{w}=(w_1,\dotsc,w_n)$ of elements of $W$ such that \begin{align}
\text{$w_t\in\{w_{t-1},s_{\eta_t}w_{t-1}\}$,\quad for each $t=1,\dotsc,n$.}
\end{align}
We call elements of $\WalkStd$ {\em walks} in $W$.

\subsubsection{Quantum walks.}\label{SS:QW}

Below we will see that the matrices describing our inverse Chevalley formula are given by sums over subsets of walks given as follows:

\begin{dfn}
We call $\mathbf{w}=(w_1,\dotsc,w_n)\in\WalkStd$ a {\em quantum walk} if $\mathbf{w}$ defines a directed walk in the quantum Bruhat graph $\QBG(W)$, i.e., for each $1\le t\le n$, either $w_t=w_{t-1}$ or $w_{t-1}\to w_t=s_{\eta_t}w_{t-1}$ is an edge in $\QBG(W)$.
\end{dfn}

We note that such an edge $w_{t-1}\to w_t$ in $\QBG(W)$ is labeled by $\pm w_{t-1}^{-1}\eta_t$. In the case when $w_{t-1}>w_t$, the requirement for an edge is that $\ell(w_{t-1})-\ell(w_t)=(2\rho,-w_{t-1}^{-1}\eta_t)-1$.

\subsubsection{Main limit.}\label{S:main-limit}

Now we proceed to the limit computation which will give our first main result. Let $\varpi_k\in P_+$ be a minuscule fundamental weight. (Thus we implicitly assume that our simply-laced group $G$ is not of type $E_8$.) As in the introduction, let $J=I\setminus\{k\}$, let $W_J=\langle s_j \mid j\in J\rangle \subset W$ be the corresponding parabolic subgroup, which is the stabilizer of $\varpi_k$, and let $W^J$ denote the set of minimal coset representatives for $W/W_J$. For $w\in W$, let $\mcr{w}\in W^J$ be the minimal representative of its coset $wW_J$.

Fix $x\in W^J$ and let $\lambda=x\varpi_k\in P$, which is an arbitrary minuscule weight.
Let $y \in W$ be the unique element such that $\mcr{\lng} = yx$. Since $\vpi_{k}$ is minuscule, we have
$\ell(\mcr{\lng}) = \ell(y) + \ell(x)$; indeed, on $W^J$, the Bruhat order coincides with 
the left weak Bruhat order \cite[Lemma 11.1.16]{Gr13}.

We fix reduced expressions $x=s_{j_l}\dotsm s_{j_1}$ and $y = s_{i_{1}} \cdots s_{i_{m}}$, and we define:
\begin{align}
\beta_r &= s_{j_{l}}s_{j_{l-1}}
\cdots s_{j_{r+1}}(\alpha_{j_{r}}), \qquad 
\text{for $1 \le r \le l$}, \\
\gamma_s &=s_{i_{m}}s_{i_{m-1}}
\cdots s_{i_{s+1}}(\alpha_{i_{s}}), \qquad 
\text{for $1 \le s \le m$}.
\end{align}
Hence, if we set
\begin{align}
\Inv(w)=\Delta^+\cap w(-\Delta^+),
\end{align}
then $\Inv(x)=\{\beta_1,\dots,\beta_l\}$ and $\Inv(y^{-1})=\bigl\{\gamma_{1},\dots,\gamma_{m}\bigr\}$.

Now, for any $w\in W$, define the set of walks $\Walk_{\lambda,w}=\WalkStd$ where $\vec{\eta}$ is given by
\begin{align}
\vec{\eta}=(\eta_1,\dotsc,\eta_n)=(\beta_l,\dotsc,\beta_1,\gamma_1,\dotsc,\gamma_m)
\end{align}
and $n=l+m$. Let $\QWalk_{\lambda,w}$ denote the subset of quantum walks in $\WalkStd$. (These sets depend on the choice of reduced expressions for $x$ and $y$, even though our notation does not indicate this.)

The inverse Chevalley formula for minuscule weights is an immediate consequence of the following, which is the main technical achievement of this paper:

\begin{thm}\label{T:Y-lim}
For any minuscule $\lambda\in P$ and $w\in W$, we have
\begin{align}
\varrho_0'(Y^{\lambda})_{w,\bullet}=\sum_{\mathbf{w}=(w_1,\dotsc,w_n)\in\QWalk_{\lambda,w}}g^-_{\mathbf{w}}\, t_{w_{l}^{-1}\lambda}\, g^+_{\mathbf{w}}\, e_{w_n}
\end{align}
where
\begin{align}
g^-_{\mathbf{w}}
&=
\prod_{1\le t\le l}
\begin{cases}
1-X^{-w_{t-1}^{-1}\eta_t} & \text{if $w_t=w_{t-1}$ and $(\rho,w_{t-1}^{-1}\eta_t)=1$}\\
-X^{w_{t-1}^{-1}\eta_t} & \text{if $w_t<w_{t-1}$}\\
1 & \text{otherwise},
\end{cases}\\
g^+_{\mathbf{w}}
&=
\prod_{l<t\le n}
\begin{cases}
1-X^{w_{t-1}^{-1}\eta_t} & \text{if $w_t=w_{t-1}$ and $(\rho,w_{t-1}^{-1}\eta_t)=-1$}\\
X^{w_{t-1}^{-1}\eta_t} & \text{if $w_t<w_{t-1}$}\\
-1 & \text{if $w_t>w_{t-1}$}\\
1 & \text{otherwise}.
\end{cases}
\end{align}
\end{thm}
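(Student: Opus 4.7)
The plan is to compute $\varrho_0'(Y^\lambda)=\lim_{\mathsf{v}\to 0}\varkappa(\varrho(Y^\lambda))$ directly from an explicit factorization of $\varrho(Y^\lambda)$ in $\kqt(P)\rtimes W_{\mathrm{ex}}$, followed by the row expansion of $\varkappa$ and extraction of the leading $\mathsf{v}$-order term. First I would establish a factorization
\[
\varrho(Y^\lambda)= c(\mathsf{v})\cdot \sG^-_{\beta_l}\sG^-_{\beta_{l-1}}\cdots \sG^-_{\beta_1}\cdot t_\lambda\cdot \sG^+_{\gamma_1}\sG^+_{\gamma_2}\cdots \sG^+_{\gamma_m}
\]
(possibly modulo a length-zero element of $\Pi$ that does not affect the matrix form), where $c(\mathsf{v})$ is an overall $\mathsf{v}$-power arising from the normalization $\mathsf{v}^{-\sum\epsilon_k}$ in the definition of $Y^\lambda$. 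This is obtained by choosing a reduced expression of $t_\lambda\in W_{\mathrm{ex}}$ compatible with the decomposition $\mcr{\lng}=yx$---this is possible because for minuscule $\vpi_k$ one has $\ell(t_{\vpi_k})=\ell(\mcr{\lng})$ and the length-zero component of $t_{\vpi_k}$ lies in $\Pi$---then rewriting each Demazure-Lusztig operator $\varrho(T_i^{\pm 1})$ as $s_i\sG^+_{\alpha_i}$ or $\sG^-_{\alpha_i}s_i$, and sweeping all reflections past the central translation via $s_it_\mu=t_{s_i\mu}s_i$. The conjugation of $\alpha_{j_r}$ (resp.\ $\alpha_{i_s}$) by the flanking reflections produces precisely the inversion roots $\beta_r$ (resp.\ $\gamma_s$).

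Next, I would apply $\varkappa$ to this factorization. Since $\varkappa$ is a ring homomorphism, and since the row expansion \eqref{E:G-matrix} extends naturally to include the central translation via $\varkappa(t_\lambda)_{w_l,w_l}=\mathsf{t}^{(\rho,w_l^{-1}\lambda)}t_{w_l^{-1}\lambda}$ (a special case of \eqref{E:kappa}), one obtains
\[
\varkappa(\varrho(Y^\lambda))_{w,\bullet}=c(\mathsf{v})\sum_{\mathbf{w}\in\Walk_{\lambda,w}}\Bigl(\prod_{t=1}^l\varkappa(\sG^-_{\eta_t})_{w_{t-1},w_t}\Bigr)\mathsf{t}^{(\rho,w_l^{-1}\lambda)}t_{w_l^{-1}\lambda}\Bigl(\prod_{t=l+1}^n\varkappa(\sG^+_{\eta_t})_{w_{t-1},w_t}\Bigr)e_{w_n}.
\]
Substituting the asymptotic expressions for the $\varkappa(\sG^\pm_\eta)$-entries listed just before the theorem and taking $\mathsf{v}\to 0$, the contribution of each walk acquires a computable $\mathsf{v}$-order. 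A step-by-step case analysis shows that the leading-order term vanishes unless, at every reflecting step, the edge $w_{t-1}\to w_t$ is an edge of $\QBG(W)$ with orientation compatible with the $\sG^-$- or $\sG^+$-nature of the step---equivalently, unless $\mathbf{w}\in\QWalk_{\lambda,w}$. For such quantum walks, the surviving leading-order coefficient matches the product $g^-_{\mathbf{w}}\,t_{w_l^{-1}\lambda}\,g^+_{\mathbf{w}}$ entry by entry, by direct comparison with the asymptotic tables.

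The main obstacle is the $\mathsf{v}$-order analysis. One must verify that every non-quantum walk has strictly positive total $\mathsf{v}$-order after the prefactor $c(\mathsf{v})$ and the middle factor $\mathsf{v}^{2(\rho,w_l^{-1}\lambda)}$ are incorporated, while every quantum walk has exactly zero total order. The existence of the limit, guaranteed by \cite[Theorem~4.4]{CO}, ensures that any negative-order contributions must cancel across walks; the task is thus to identify precisely the non-negative contributions and their leading coefficients. The main combinatorial tools are the quantum Bruhat length identities (at a quantum step, $\ell(w_t)-\ell(w_{t-1})=2(\rho,w_{t-1}^{-1}\eta_t)+1$ with $w_{t-1}^{-1}\eta_t<0$), together with a telescoping expression of $(\rho,w_l^{-1}\lambda)$ as a signed sum of the step-label pairings $(\rho,w_{t-1}^{-1}\eta_t)$ over the reflecting steps in the $\sG^-$-half of the walk. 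Combined with the tabulated $\mathsf{t}$-powers from each step type, these deliver the required cancellation exactly along quantum walks.
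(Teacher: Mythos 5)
Your overall strategy coincides with the paper's: write $\varrho(Y^{\lambda})=\mathsf{t}^{-(\rho,\lambda)}\,\sG^-_{\beta_l}\dotsm\sG^-_{\beta_1}\,t_{\lambda}\,\sG^+_{\gamma_1}\dotsm\sG^+_{\gamma_m}$ (the prefactor being fixed by the identity $2\ell(x)-\ell(\mcr{\lng})=-2(\rho,\lambda)$, which comes from $t_{\lambda}=x\pi y$), apply $\varkappa$, expand rows as sums over walks, and extract the leading $\mathsf{v}$-order. The factorization and the row expansion in your sketch are fine. The problem lies entirely in the $\mathsf{v}$-order analysis, which you correctly flag as the main obstacle but do not actually resolve.

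Concretely: since $(\lambda,\beta_r)=-1$, your telescoping yields $(\rho,w_l^{-1}\lambda)-(\rho,w_0^{-1}\lambda)=\sum(\rho,w_{t-1}^{-1}\eta_t)$ over the reflecting steps $t\le l$; it does not express $(\rho,w_l^{-1}\lambda)$ itself, and the residual $(\rho,w_0^{-1}\lambda)-(\rho,\lambda)=\ell_{\lambda}^{-}(w_0)-\ell_{\lambda}^{+}(w_0)$ is a $w_0$-dependent, possibly negative power of $\mathsf{t}$ that still must be distributed over the steps. Worse, even ignoring the residual, the per-step orders your telescoping assigns do not match the QBG criterion: a reflecting step in the first half with $w_{t-1}^{-1}\eta_t>0$ would receive $\mathsf{t}^{(\rho,w_{t-1}^{-1}\eta_t)}\cdot\mathsf{t}^{-1}=\mathsf{v}^{2(\rho,w_{t-1}^{-1}\eta_t)-2}$, which survives the limit only when $(\rho,w_{t-1}^{-1}\eta_t)=1$, whereas the theorem keeps every Bruhat edge $\ell(w_t)=\ell(w_{t-1})+1$. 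The correct bookkeeping replaces $(\rho,\cdot)$-pairings by the partial inversion counts $\ell^{-}_{\lambda,r}$, $\ell^{+}_{\lambda,s}$ and rests on two genuinely nontrivial facts about minuscule weights: the identifications $\Inv(w)_{\lambda}^{-}=\Inv(w)\cap\{\beta_1,\dots,\beta_l\}$ and $\Inv(w)_{\lambda}^{+}=\Inv(w)\cap\{\gamma_1,\dots,\gamma_m\}$, and the length-difference identities \eqref{E:l-diff-}, \eqref{E:l-diff+}. These are exactly what convert the $\mathsf{t}$-exponents into length differences step by step, and their proofs (Proposition~\ref{P:L} and Appendix~\ref{A:L}) are a substantial part of the argument for which your sketch offers no substitute.

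Your fallback is also not logically sound: citing \cite[Theorem 4.4]{CO} to conclude that "negative-order contributions must cancel across walks, so it suffices to identify the non-negative contributions" fails because, if some walks had negative leading order, their subleading order-zero terms would still contribute to the limit after the leading terms cancel; one could not then read off the answer from the leading terms of the remaining walks. The crux of the actual proof is that, after commuting the diagonal matrices $\varkappa_{\lambda}^{\pm}$ through the $\sG^{\mp}$ factors, \emph{every individual factor of every walk} has non-negative order, so no cross-walk cancellation is needed, the limit exists term by term, and the order-zero walks are precisely the quantum walks. Establishing that uniform non-negativity is the missing step in your proposal.
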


For the proof of Theorem~\ref{T:Y-lim}, we will need the following notions, which rely heavily on $\lambda\in P$ being minuscule. For $w \in W$, and with all other notation as above, let us define
\begin{align}
& 
\Inv(w)_{\lambda}^{+}=\bigl\{ \gamma \in \Inv(w) \mid (\gamma,\lambda)=1\bigr\},
\qquad \ell_{\lambda}^{+}(w)=\# \Inv(w)_{\lambda}^{+}, \\
& 
\Inv(w)_{\lambda}^{-}=\bigl\{ \gamma \in \Inv(w) \mid (\gamma,\lambda)=-1\bigr\},
\qquad \ell_{\lambda}^{-}(w)=\# \Inv(w)_{\lambda}^{-},
\end{align}
and
\begin{align} 
\Inv(w)_{\lambda,r}^{-}&=\Inv(w) \cap 
\bigl\{\beta_{r},\beta_{r+1},\dots,\beta_{l}\bigr\}, \qquad 
\ell_{\lambda,r}^{-}(w)=\#\Inv(w)_{\lambda,r}^{-},\\
\Inv(w)_{\lambda,s}^{+}&=\Inv(w) \cap 
\bigl\{\gamma_{s},\gamma_{s+1},\dots,\gamma_{m}\bigr\}, \qquad 
\ell_{\lambda,s}^{+}(w)=\#\Inv(w)_{\lambda,s}^{+}, 
\end{align}
where $1 \le r \le l$ and  $1 \le s \le m$.

\begin{prop}\label{P:L}
With all notation as above:
\begin{enumerate}
\item We have
\begin{align}\label{E:l-x-lng}
2\ell(x) - \ell(\mcr{\lng}) &= - 2(\rho,\lambda).
\end{align}
\item For any $w\in W$,
\begin{align}\label{E:rho-w-rho}
(\rho-w\rho,\lambda) &= \ell_{\lambda}^{+}(w) - \ell_{\lambda}^{-}(w),
\end{align}
and
\begin{align}
\Inv(w)_{\lambda}^{-} &= \Inv(w) \cap \Inv(x).\label{E:inv-la-}\\
\Inv(w)_{\lambda}^{+} &= \Inv(w) \cap \Inv(y^{-1})\label{E:inv-la+}.
\end{align}

\item If $s_{\beta_{r}}w < w$ for $1 \le r \le l$, then 
\begin{equation}\label{E:l-diff-}
\ell(w)-\ell(s_{\beta_{r}}w) = 
2 (\ell_{\lambda,r}^{-}(w)-\ell_{\lambda,r}^{-}(s_{\beta_{r}}w)) - 1.
\end{equation}
\item If $s_{\gamma_{s}}w < w$ for $1 \le s \le m$, then
\begin{equation}\label{E:l-diff+}
\ell(w)-\ell(s_{\gamma_{s}}w) = 
2 (\ell_{\lambda,s}^{+}(w)-\ell_{\lambda,s}^{+}(s_{\gamma_{s}}w))- 1.
\end{equation}
\end{enumerate}
\end{prop}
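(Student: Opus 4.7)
The proof breaks naturally into three parts.

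For (1), I would use the length-additive decomposition $\mcr{\lng}=yx$ together with the fact that $\Inv(\mcr{\lng})=\DJp$ (since $\mcr{\lng}\in W^{\J}$ is the minimal longest representative of $\lng W_{\J}$). Minusculity of $\vpi_{k}$ gives $(\alpha,\vpi_{k})=1$ for $\alpha\in\DJp$ and $0$ otherwise, so $\ell(\mcr{\lng})=|\DJp|=2(\rho,\vpi_{k})$. Similarly, the standard identity $\rho-x^{-1}\rho=\sum_{\beta\in\Inv(x^{-1})}\beta$ combined with $\Inv(x^{-1})\subseteq\DJp$ (a consequence of $x\in W^{\J}$, which implies $x(\DJs)\subseteq\Delta^{+}$) yields $(\rho,\lambda)=(x^{-1}\rho,\vpi_{k})=(\rho,\vpi_{k})-\ell(x)$. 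Substituting into the definition of the left-hand side gives (1).

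For (2), I would pair $\rho-w\rho=\sum_{\gamma\in\Inv(w)}\gamma$ with $\lambda$: minusculity collapses the sum to contributions in $\{0,\pm 1\}$ and gives (2a) once (2b) and (2c) are known. Both identifications follow from the rewriting $(\gamma,\lambda)=(x^{-1}\gamma,\vpi_{k})$ and the elementary characterization that, for minuscule $\vpi_{k}$, $(\delta,\vpi_{k})=\pm 1$ iff $\pm\delta\in\DJp$. For (2b), $x\in W^{\J}$ forces $x(\DJs)\subseteq\Delta^{+}$, which rules out $-x^{-1}\gamma\in\DJs$ whenever $\gamma\in\Inv(x)$, hence $(\gamma,\lambda)=-1$; the converse is automatic from the sign of $x^{-1}\gamma$. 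For (2c), use $y\gamma=\mcr{\lng}(x^{-1}\gamma)$ together with $\mcr{\lng}(\DJp)=-\DJp$: the forward direction is a one-line check, and the converse is a short case analysis on $(\gamma,\lambda)\in\{0,\pm 1\}$, ruling out the value $-1$ via (2b) and the value $0$ via $\mcr{\lng}(\DJs)\subseteq\Delta^{+}$.

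For (3) and (4), begin with the general length-difference identity: if $s_{\beta}w<w$ then
\begin{align*}
\ell(w)-\ell(s_{\beta}w)=1+2b_{\beta}(w)-|B_{\beta}|,
\end{align*}
where $B_{\beta}=\{\alpha\in\Delta^{+}:\alpha\ne\beta,\,s_{\beta}\alpha<0\}$ and $b_{\beta}(w)=\#(B_{\beta}\cap\Inv(w))$; this follows from the partition $\Delta^{+}=\{\beta\}\sqcup A_{\beta}\sqcup B_{\beta}$ (with $A_{\beta}$ $s_{\beta}$-stable) together with the involution $\alpha\mapsto-s_{\beta}\alpha$ on $B_{\beta}$. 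For (3), set $T=\Inv(x_{r})=\{\beta_{r},\dots,\beta_{l}\}$ with $x_{r}=s_{j_{l}}\cdots s_{j_{r}}$, and exploit the conjugation identity $x_{r}^{-1}s_{\beta_{r}}=x_{r+1}^{-1}$ (immediate from $\beta_{r}=x_{r+1}(\alpha_{j_{r}})$ and $x_{r}=x_{r+1}s_{j_{r}}$) together with $T\setminus\{\beta_{r}\}=\Inv(x_{r+1})$ to establish two structural facts: (i) $A_{\beta_{r}}\cap T$ is $s_{\beta_{r}}$-stable, and (ii) the involution $\alpha\mapsto-s_{\beta_{r}}\alpha$ on $B_{\beta_{r}}$ interchanges $B_{\beta_{r}}\cap T$ with $B_{\beta_{r}}\setminus T$. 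A direct bookkeeping count using (i), (ii), and the facts $\beta_{r}\in\Inv(w)$, $\beta_{r}\notin\Inv(s_{\beta_{r}}w)$ then yields $\#(\Inv(w)\cap T)-\#(\Inv(s_{\beta_{r}}w)\cap T)=1+b_{\beta_{r}}(w)-|B_{\beta_{r}}|/2$, which matches $(1+\ell(w)-\ell(s_{\beta_{r}}w))/2$, proving (3). Part (4) is entirely symmetric, using $y_{s}=s_{i_{m}}\cdots s_{i_{s}}$ with $\Inv(y_{s})=\{\gamma_{s},\dots,\gamma_{m}\}$ in place of $x_{r}$ and $T$. The main obstacle is the careful verification of the structural facts (i) and (ii); once these stability/swap properties are in place, the remainder of the argument is a short count.
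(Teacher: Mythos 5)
Your proposal is correct, and for parts (3) and (4) it takes a genuinely different route from the paper. The paper's proof of \eqref{E:l-diff-} rests on a chain of auxiliary results (Lemmas~\ref{lemb}--\ref{lem3} and Remark~\ref{remb}): it introduces the height statistic $\Ht$, proves by induction on $\ell(x)$ that $\#\{t>r : (\beta_t,\beta_r)=1\}=\Ht(\beta_r)-1$ and $\#R_r=2(\Ht(\beta_r)-1)$, and uses minusculity of $\lambda$ to show that the involution $\sigma(\alpha)=\beta_r-\alpha$ on $R_r$ swaps $B_r=\{\beta_t : t>r,\ (\beta_t,\beta_r)=1\}$ with its complement. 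Your argument reaches the same swap property (your structural fact (ii), which in simply-laced type is exactly the statement $\sigma(B_r)\sqcup B_r=R_r$, since $-s_{\beta_r}\alpha=\beta_r-\alpha$ on $B_{\beta_r}$ and Lemma~\ref{lem2} shows $B_{\beta_r}\cap T=B_r$) in one line from the conjugation identity $x_{r+1}=s_{\beta_r}x_r$ together with $\Inv(x_{r+1})=\Inv(x_r)\setminus\{\beta_r\}$: for $\alpha\in B_{\beta_r}$ one has $-s_{\beta_r}\alpha\in T \iff x_{r+1}^{-1}\alpha>0 \iff \alpha\notin T$, and similarly for fact (i). This is purely Coxeter-theoretic, avoids the height induction entirely, and in fact never uses that $\lambda$ is minuscule, so it proves \eqref{E:l-diff-} and \eqref{E:l-diff+} in greater generality; your general identity $\ell(w)-\ell(s_\beta w)=1+2b_\beta(w)-|B_\beta|$ and the subsequent bookkeeping check out. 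For parts (1) and (2) your approach matches the paper's.

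One minor imprecision: the identities $\Inv(\mcr{\lng})=\DJp$ and $\mcr{\lng}(\DJp)=-\DJp$ are only correct when $-\lng\vpi_k=\vpi_k$. In general $\Inv(\mcr{\lng})=\Delta^+\setminus\Delta^+_{\omega(J)}$ and $\mcr{\lng}(\DJp)=-(\Delta^+\setminus\Delta^+_{\omega(J)})$, where $\omega$ is the diagram automorphism with $\lng\vpi_k=-\vpi_{\omega(k)}$ and $\omega(J)=I\setminus\{\omega(k)\}$. This does not affect part (1), which only uses the cardinality $\ell(\mcr{\lng})=|\DJp|=2(\rho,\vpi_k)$, and the argument for \eqref{E:inv-la+} goes through verbatim once $J$ is replaced by $\omega(J)$ on the right-hand side (as in the paper's Appendix, which passes to $\nu=y^{-1}\vpi_{\omega(k)}$).
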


\begin{proof}
See Appendix~\ref{A:L}.
\end{proof}

\begin{rem}
Formulas \eqref{E:l-diff-} and \eqref{E:l-diff+} generalize the following well-known formula for length differences in the symmetric group $S_n$:
\begin{align*}
\ell(w)-\ell((ij)w) &= 1+2\cdot|\{k: \text{$i<k<j$ and $w^{-1}(i)>w^{-1}(k)>w^{-1}(j)$}\}|
\end{align*}
where $1\le i < j\le n$, $w\in S_n$ is a permutation such that $w^{-1}(i)>w^{-1}(j)$, and $(ij)$ denotes the transposition swapping $i$ and $j$.
\end{rem}

\begin{proof}[Proof of Theorem~\ref{T:Y-lim}]
Let $\pi\in \Pi$ be the length-zero element of $W_{\mathrm{ex}}$ corresponding to coset $\varpi_k+Q$ in $P/Q$. We have the formula $t_{\varpi_k} = \pi \mcr{\lng}=\pi yx$ (which may be taken as the definition of $\pi$) and hence $t_{\lambda} = x\pi y.$

Using \eqref{E:T-Y} and \eqref{E:l-x-lng}, we find
\begin{align*}
Y^{\lambda}
&= \mathsf{v}^{2\ell(x)-\ell(\mcr{\lng})} T_{x^{-1}}^{-1} \pi T_y\\
&= \mathsf{t}^{-(\rho,\lambda)}T_{x^{-1}}^{-1} \pi T_y
\end{align*}
in $\hHH$, and
\begin{align*}
\varrho(Y^{\lambda})
&= \mathsf{t}^{-(\rho,\lambda)} \varrho(T_{x^{-1}}^{-1})x^{-1} t_{\lambda}y^{-1} \varrho(T_y)\\
&= \mathsf{t}^{-(\rho,\lambda)}\sG^-_{\beta_l}\dotsm \sG^-_{\beta_1}t_{\lambda}\sG^+_{\gamma_1}\dotsm \sG^+_{\gamma_m}
\end{align*}
in the polynomial representation.
Thus we need to compute the limit
\begin{align*}
\varrho_0'(Y^{\lambda})=\lim_{\mathsf{v}\to 0} \varkappa(\sG^-_{\beta_l}\dotsm \sG^-_{\beta_1})\,\mathsf{t}^{-(\rho,\lambda)}\varkappa(t_{\lambda})\,\varkappa(\sG^+_{\gamma_1}\dotsm \sG^+_{\gamma_m}).
\end{align*}

The matrix $\mathsf{t}^{-(\rho,\lambda)}\varkappa(t_{\lambda})$ is diagonal with entries
\begin{align*}
\mathsf{t}^{-(\rho,\lambda)}\varkappa(t_{\lambda})_{w,w} &= 
\mathsf{t}^{(\rho,w^{-1}\lambda)-(\rho,\lambda)}t_{w^{-1}\lambda}. 
\end{align*}
Using \eqref{E:rho-w-rho}, we can write
$\mathsf{t}^{-(\rho,\lambda)}\varkappa(t_{\lambda})=\varkappa_\lambda^-\tau_\lambda\varkappa_\lambda^+$, where $\varkappa_\lambda^\pm$ and $\tau_\lambda$ are the diagonal matrices given by
\begin{align}
(\varkappa_\lambda^-)_{w,w}&=\mathsf{t}^{\ell_\lambda^-(w)},\qquad
(\tau_\lambda)_{w,w} = t_{w^{-1}\lambda},\qquad
(\varkappa_\lambda^+)_{w,w}=\mathsf{t}^{-\ell_\lambda^+(w)}.
\end{align}
Our strategy is to commute $\varkappa_\lambda^-$ to the left past $\varkappa(\sG^-_{\beta_l}\dotsm \sG^-_{\beta_1})$, and $\varkappa_\lambda^+$ to the right past $\varkappa(\sG^+_{\gamma_1}\dotsm \sG^+_{\gamma_m})$. After doing so, all negative powers of $\mathsf{v}$ will disappear.

For $r=1,\dotsc,l$, we have
\begin{align*}
\varkappa(\sG_{\beta_r}^-)_{w,w} \cdot \mathsf{t}^{\ell_{\lambda,r}^-(w)} &\approx
\mathsf{t}^{\ell_{\lambda,r+1}^-(w)}\times
\begin{cases}
1-X^{-w^{-1}\beta_r} & \text{if $(\rho,w^{-1}\beta_r)=1$}\\
1 & \text{if $(\rho,w^{-1}\beta_r)>1$}\\
1 & \text{if $w^{-1}\beta_r<0$}
\end{cases}
\end{align*}
and
\begin{align*}
&\varkappa(\sG_{\beta_r}^-)_{w,s_{\beta_r}w} \cdot \mathsf{t}^{\ell_{\lambda,r}^-(s_{\beta_r}w)}\\
&\approx
\mathsf{t}^{\ell_{\lambda,r+1}^-(w)}
\times
\begin{cases}
\mathsf{t}^{-1+\ell_{\lambda,r}^-(s_{\beta_r}w)-\ell_{\lambda,r}^-(w)} &\text{if $w^{-1}\beta_r>0$}\\
-\mathsf{t}^{-(\rho,w^{-1}\beta_r)+\ell_{\lambda,r}^-(s_{\beta_r}w)-\ell_{\lambda,r}^-(w)}X^{w^{-1}\beta_r} & \text{if $w^{-1}\beta_r<0$}
\end{cases}\\
&=
\mathsf{t}^{\ell_{\lambda,r+1}^-(w)}
\times
\begin{cases}
\mathsf{v}^{-1+\ell(s_{\beta_r}w)-\ell(w)} &\text{if $w^{-1}\beta_r>0$}\\
-\mathsf{v}^{-(2\rho,w^{-1}\beta_r)-1+\ell(s_{\beta_r}w)-\ell(w)}X^{w^{-1}\beta_r} & \text{if $w^{-1}\beta_r<0$}
\end{cases}
\end{align*}
where we use \eqref{E:l-diff-} to obtain the last equality. Notice that, in all cases, each factor after the bracket involves no negative powers of $\mathsf{v}$. Moreover, after commuting $\varkappa_\lambda^-$ all the way to the left, we arrive at $\mathsf{t}^{\ell_{\lambda,l+1}^-(w)}=1$, for any $w\in W$.

Similarly, for $s=1,\dotsc,m$, we have
\begin{align*}
\mathsf{t}^{-\ell_{\lambda,s}^+(w)}\cdot \varkappa(\sG_{\gamma_s}^+)_{w,w} & 
\approx
\mathsf{t}^{-\ell_{\lambda,s+1}^+(w)}\times
\begin{cases}
1 & \text{if $w^{-1}\gamma_s>0$}\\
1-X^{w^{-1}\gamma_s} & \text{if $(\rho,w^{-1}\gamma_s)=-1$}\\
1 & \text{if $(\rho,w^{-1}\gamma_s)<-1$}
\end{cases}
\end{align*}
and
\begin{align*}
&\mathsf{t}^{-\ell_{\lambda,s}^+(w)}\cdot \varkappa(\sG_{\gamma_s}^+)_{w,s_{\gamma_s} w}\\
&\approx
\mathsf{t}^{-\ell_{\lambda,s+1}^+(s_{\gamma_s}w)}
\times
\begin{cases}
-\mathsf{t}^{-1+\ell_{\lambda,s}^+(s_{\gamma_s}w)-\ell_{\lambda,s}^+(w)}&\text{if $w^{-1}\gamma_s>0$}\\
\mathsf{t}^{-(\rho,w^{-1}\gamma_s)+\ell_{\lambda,s}^+(s_{\gamma_s}w)-\ell_{\lambda,s}^+(w)}X^{w^{-1}\gamma_s} & \text{if $w^{-1}\gamma_s<0$}
\end{cases}\\
&=
\mathsf{t}^{-\ell_{\lambda,s+1}^+(s_{\gamma_s}w)}
\times
\begin{cases}
-\mathsf{v}^{-1+\ell(s_{\gamma_s}w)-\ell(w)} & \text{if $w^{-1}\gamma_s>0$}\\
\mathsf{v}^{-(2\rho,w^{-1}\gamma_s)-1+\ell(s_{\gamma_s}w)-\ell(w)}X^{w^{-1}\gamma_s} & \text{if $w^{-1}\gamma_s<0$}.
\end{cases}
\end{align*}
As above, each factor after the bracket involves no negative powers of $\mathsf{v}$, and after commuting $\varkappa_\lambda^+$ all the way to the right, we arrive at $\mathsf{t}^{\ell_{\lambda,m+1}^+(w)}=1$, for any $w\in W$.

To complete the proof, we expand the product of resulting matrices (including $\tau_\lambda$) and take $\mathsf{v}\to 0$. Taking into account the exponents of $\mathsf{v}$ above, one sees that the surviving terms are exactly those indexed by quantum walks $\QWalk_{\lambda,w}$.
\end{proof}

\subsection{Inverse Chevalley formula.}

By means of \cite[Theorem 5.1]{Orr}, Theorem~\ref{T:Y-lim} immediately gives the following algebraic form of the inverse Chevalley formula in $K_{H\times\BC^*}(\bQG)$, which is our first main result:
\begin{thm}\label{T:K-w-arb}
For any minuscule $\lambda\in P$ and $w\in W$, we have
\begin{align}\label{E:K-w-arb}
e^\lambda\cdot[\CO_{\bQG(w)}] &= \sum_{\mathbf{w}=(w_1,\dotsc,w_n)\in\QWalk_{\lambda,w}}[\CO_{\bQG(w_n)}]\cdot  \tg^+_{\mathbf{w}} \, X^{-\lng w_l^{-1}\lambda}\, \tg^-_{\mathbf{w}},
\end{align}
in terms of the $q$-Heisenberg action of Proposition~\ref{P:heis-act}, where
\begin{align}\label{E:tg-}
\tg^-_{\mathbf{w}}
&=
\prod_{1\le t\le l}
\begin{cases}
1-\tX^{-\lng w_{t-1}^{-1}\eta_t} & \text{if $w_t=w_{t-1}$ and $(\rho,w_{t-1}^{-1}\eta_t)=1$}\\
-\tX^{\lng w_{t-1}^{-1}\eta_t} & \text{if $w_t<w_{t-1}$}\\
1 & \text{otherwise}
\end{cases}\\
\label{E:tg+}
\tg^+_{\mathbf{w}}
&=
\prod_{l< t\le n}
\begin{cases}
1-\tX^{\lng w_{t-1}^{-1}\eta_t} & \text{if $w_t=w_{t-1}$ and $(\rho,w_{t-1}^{-1}\eta_t)=-1$}\\
\tX^{\lng w_{t-1}^{-1}\eta_t} & \text{if $w_t<w_{t-1}$}\\
-1 & \text{if $w_t>w_{t-1}$}\\
1 & \text{otherwise}
\end{cases}
\end{align}
and $\tX^\beta=q\cdot t_\beta X^\beta$ for $\beta\in\Delta$.
\end{thm}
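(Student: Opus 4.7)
The plan is to deduce Theorem~\ref{T:K-w-arb} directly from Theorem~\ref{T:Y-lim} using \cite[Theorem 5.1]{Orr}, which provides the bridge between the DAHA limit $\varrho_0'(Y^\lambda)$ and the matrices $\varrho_0(X^{-\lambda}) \in \Mat_W(\heis)$ encoding the nil-DAHA action on $\mathbb{K}$.

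First, I would rewrite the claim in terms of $\varrho_0$. By Proposition~\ref{P:daha-act}, the left action of $X^{-\lambda} \in \HH_0$ on $\Kr$ is multiplication by the equivariant scalar $e^{\lambda}$. Applying \eqref{E:rho-0} with $H = X^{-\lambda}$ gives
\begin{align*}
e^{\lambda} \cdot [\CO_{\bQG(w)}] = \sum_{v \in W} [\CO_{\bQG(v)}] \cdot \varrho_0(X^{-\lambda})_{v,w}.
\end{align*}
Hence, it suffices to identify each matrix entry $\varrho_0(X^{-\lambda})_{v,w}$, viewed as an element of $\heis$ acting from the right on Schubert classes, with the coefficient of $[\CO_{\bQG(v)}]$ on the right-hand side of \eqref{E:K-w-arb}.

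Next, I would invoke \cite[Theorem 5.1]{Orr} to express $\varrho_0(X^{-\lambda})$ in terms of $\varrho_0'(Y^\lambda)$. This step reflects the DAHA duality anti-involution sending $X^{-\lambda} \mapsto Y^\lambda$, and at the matrix level it amounts to an explicit automorphism of $\Mat_W(\heis)$ that (i) reverses the multiplicative order of the factors, so that the contributions of $g^-_\mathbf{w}$ (indexed by $1 \le t \le l$) and $g^+_\mathbf{w}$ (indexed by $l < t \le n$) pass to $\tg^-_\mathbf{w}$ and $\tg^+_\mathbf{w}$ sitting on opposite sides of the central factor; (ii) replaces the internal translation $t_{w_l^{-1} \lambda}$ by the line-bundle twist $X^{-\lng w_l^{-1} \lambda}$; and (iii) substitutes each $X^{\alpha}$ appearing in $g^{\pm}_\mathbf{w}$ by $\tX^{\lng \alpha} = q \cdot t_{\lng \alpha}\, X^{\lng \alpha}$. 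The extra $q \cdot t_{\lng \alpha}$ factor is precisely what is needed to record both the loop-rotation weight and the $-\lng$ twist built into the right action of $t_\beta$ on Schubert classes via Proposition~\ref{P:heis-act}.

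Finally, substituting the explicit formulas for $g^\pm_\mathbf{w}$ from Theorem~\ref{T:Y-lim} through this automorphism produces exactly the expressions \eqref{E:tg-} and \eqref{E:tg+} for $\tg^\pm_\mathbf{w}$, and assembling the walk-by-walk contributions yields the right-hand side of \eqref{E:K-w-arb}. The principal obstacle is therefore not mathematical depth but careful bookkeeping: one must verify that the DAHA-duality conversion from the $Y$-picture to the $X$-picture in \cite[Theorem 5.1]{Orr}, together with all $-\lng$ twists and the replacement of bare translations $t_\beta$ by $\tX^\beta = q \cdot t_\beta X^\beta$, produces precisely the combinatorial data $\tg^\pm_\mathbf{w}$ appearing in the statement, with the correct signs and with the correct assignment of factors to the three cases $w_t = w_{t-1}$, $w_t < w_{t-1}$, and $w_t > w_{t-1}$.
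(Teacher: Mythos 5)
Your proposal is correct and follows essentially the same route as the paper: the paper likewise deduces Theorem~\ref{T:K-w-arb} immediately from Theorem~\ref{T:Y-lim} via \cite[Theorem 5.1]{Orr}, with the "simple automorphism" converting $\varrho_0'(Y^\lambda)$ into $\varrho_0(X^{-\lambda})$ doing exactly the bookkeeping you describe (order reversal, $t_{w_l^{-1}\lambda}\mapsto X^{-\lng w_l^{-1}\lambda}$, and $X^\alpha\mapsto\tX^{\lng\alpha}$). Your account is, if anything, more explicit than the paper's one-line justification.
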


\begin{rem}
The elements $\tX^\beta$ commute, as one easily checks.
\end{rem}

\begin{rem}
The truncation of \eqref{E:K-w-arb} to classes supported on the Schubert varieties $\{\bQG(w)\}_{w\in W}$ is achieved by setting $\tX^\beta=0$ in \eqref{E:tg-} and \eqref{E:tg+}, where $\beta\in\Delta^+$ always. One recovers the corresponding inverse Chevalley form in $K_H(G/B)$. Note that the truncation of \eqref{E:K-w-arb} is positive for $\lambda\in -P_+$ (when $l=n$) and alternating for $\lambda\in P_+$ (when $l=0$).
\end{rem}

\subsubsection{Decorated quantum walks.}\label{SS:DQW}
We now proceed to further combinatorialize \eqref{E:K-w-arb}. First, we enlarge the summation set, so that each term on the right-hand side of \eqref{E:K-w-arb} is a product of monomials in the $q$-Heisenberg algebra.

Given $\mathbf{w}=(w_1,\dotsc,w_n)\in\QWalk_{\lambda,w}$, let 
$S^-(\mathbf{w})$ denote the set of steps $t$, for $1\le t\le l$, such that $w_t=w_{t-1}$ and $(\rho,w_{t-1}^{-1}\eta_t)=1$. Similarly, let $S^+(\mathbf{w})$ denote the set of steps $t$, for $l<t\le n$ such that $w_t=w_{t-1}$ and $(\rho,w_{t-1}^{-1}\eta_t)=-1$. 

Let $S(\mathbf{w})=S^-(\mathbf{w})\cup S^+(\mathbf{w})$,
and define the set of {\em decorated quantum walks} $\DQWalk_{\lambda,w}$ to consist of all pairs $(\mathbf{w},\mathbf{b})$ where $\mathbf{w}\in \QWalk_{\lambda,w}$ and $\mathbf{b}$ is a $\{0,1\}$-valued function on $S(\mathbf{w})$. Then \eqref{E:K-w-arb} can be written as:
\begin{align}\label{E:K-w-arb-heis}
e^{\lambda}\cdot[\CO_{\bQG(w)}] &= \sum_{(\mathbf{w},\mathbf{b})\in\DQWalk_{\lambda,w}}[\CO_{\bQG(w_n)}]\cdot  \tg^+_{(\mathbf{w},\mathbf{b})} \, X^{-\lng w_l^{-1}\lambda}\, \tg^-_{(\mathbf{w},\mathbf{b})},
\end{align}
where
\begin{align}
\tg^-_{(\mathbf{w},\mathbf{b})}
&=
\prod_{1\le t\le l}
\begin{cases}
(-\tX^{-\lng w_{t-1}^{-1}\eta_t})^{\mathbf{b}(t)} & \text{if $t\in S^-(\mathbf{w})$}\\
-\tX^{\lng w_{t-1}^{-1}\eta_t} & \text{if $w_t<w_{t-1}$}\\
1 & \text{otherwise},
\end{cases}\\
\tg^+_{(\mathbf{w},\mathbf{b})}
&=
\prod_{l< t\le n}
\begin{cases}
(-\tX^{\lng w_{t-1}^{-1}\eta_t})^{\mathbf{b}(t)} & \text{if $t\in S^+(\mathbf{w})$}\\
\tX^{\lng w_{t-1}^{-1}\eta_t} & \text{if $w_t<w_{t-1}$}\\
-1 & \text{if $w_t>w_{t-1}$}\\
1 & \text{otherwise}.
\end{cases}
\end{align}

Next, in order to expand \eqref{E:K-w-arb-heis} further, let us introduce some more notation. For $(\mathbf{w},\mathbf{b})\in\DQWalk_{\lambda,w}$, define the sign
\begin{align*}
(-1)^{(\mathbf{w},\mathbf{b})} &= \prod_{\substack{1\le t\le l\\w_t<w_{t-1}}}(-1)\prod_{\substack{l< t\le n \\ w_t>w_{t-1}}}(-1)\prod_{t\in S(\mathbf{w})}(-1)^{\mathbf{b}(t)}
\end{align*}
and partial weights (in $Q$):
\begin{align*}
\wt_0(\mathbf{w},\mathbf{b})&=0\\
\wt_t(\mathbf{w},\mathbf{b})&=\wt_{t-1}(\mathbf{w},\mathbf{b})+
\begin{cases}
-\mathbf{b}(t)\lng w_{t-1}^{-1}\eta_t & \text{if $t\in S^-(\mathbf{w})$}\\
\lng w_{t-1}^{-1}\eta_t & \text{if $w_t<w_{t-1}$}\\
0 & \text{otherwise}
\end{cases},
\qquad \text{for $1\le t\le l$}\\
\wt_t(\mathbf{w},\mathbf{b})&=\wt_{t-1}(\mathbf{w},\mathbf{b})+
\begin{cases}
\mathbf{b}(t)\lng w_{t-1}^{-1}\eta_t & \text{if $t\in S^+(\mathbf{w})$}\\
\lng w_{t-1}^{-1}\eta_t & \text{if $w_t<w_{t-1}$}\\
0 & \text{otherwise}.
\end{cases},
\qquad \text{for $l<t\le n$}.
\end{align*}
Define the weight of $(\mathbf{w},\mathbf{b})$ to be $\wt(\mathbf{w},\mathbf{b})=\wt_n(\mathbf{w},\mathbf{b})\in Q$, and set $d_t(\mathbf{w},\mathbf{b})=\wt_t(\mathbf{w},\mathbf{b})-\wt_{t-1}(\mathbf{w},\mathbf{b})$ for $1\le t\le n$.

Define the partial degrees (in $\BZ$):
\begin{align*}
\deg_0^-(\mathbf{w},\mathbf{b}) &= 0\\
\deg_t^-(\mathbf{w},\mathbf{b}) &= \deg_{t-1}^-(\mathbf{w},\mathbf{b})+\frac{|d_t(\mathbf{w},\mathbf{b})|^2}{2}+(d_t(\mathbf{w},\mathbf{b}),\wt_{t-1}(\mathbf{w},\mathbf{b})),\quad \text{for $1\le t\le l$}\\
\deg_{l}^+(\mathbf{w},\mathbf{b}) &= \deg_{l}^-(\mathbf{w},\mathbf{b})+(-\lng w_{l}^{-1}\lambda,\wt_{l}(\mathbf{w},\mathbf{b}))\\
\deg_t^+(\mathbf{w},\mathbf{b}) &= \deg_{t-1}^+(\mathbf{w},\mathbf{b})+\frac{|d_t(\mathbf{w},\mathbf{b})|^2}{2}+(d_t(\mathbf{w},\mathbf{b}),\wt_{t-1}(\mathbf{w},\mathbf{b})), \quad \text{for $l<t\le n$}.
\end{align*}
Define $\deg(\mathbf{w},\mathbf{b})=\deg_n^+(\mathbf{w},\mathbf{b})\in\BZ$.

Then, from \eqref{E:K-w-arb-heis}, we obtain our second main result, the combinatorial form of our inverse Chevalley formula:
\begin{thm}\label{T:K-w-arb-dec}
For any minuscule $\lambda\in P$ and $w\in W$, we have
\begin{align}\label{E:K-w-arb-dec}
&e^\lambda\cdot[\CO_{\bQG(w)}]\\
&\quad = \sum_{(\mathbf{w},\mathbf{b})\in\DQWalk_{\lambda,w}}(-1)^{(\mathbf{w},\mathbf{b})}q^{\deg(\mathbf{w},\mathbf{b})}\cdot[\CO_{\bQG(w_nt_{-\lng(\wt(\mathbf{w},\mathbf{b}))})}(-\lng w_l^{-1}\lambda+\wt(\mathbf{w},\mathbf{b}))].\notag
\end{align}
\end{thm}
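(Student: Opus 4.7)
The plan is to derive Theorem~\ref{T:K-w-arb-dec} as an immediate consequence of the reformulation \eqref{E:K-w-arb-heis} of Theorem~\ref{T:K-w-arb}, by fully expanding each summand on the right-hand side of \eqref{E:K-w-arb-heis} using the explicit action formulas of Proposition~\ref{P:heis-act}. The statistics $(-1)^{(\mathbf{w},\mathbf{b})}$, $\wt(\mathbf{w},\mathbf{b})$, and $\deg(\mathbf{w},\mathbf{b})$ defined in \S\ref{SS:DQW} are engineered precisely to record the output of this expansion, so the proof reduces to a direct bookkeeping verification.

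For each $(\mathbf{w},\mathbf{b})\in\DQWalk_{\lambda,w}$, every factor of $\tg^{\pm}_{(\mathbf{w},\mathbf{b})}$ is a single Heisenberg monomial $\epsilon_t\tX^{d_t}$ with $\epsilon_t\in\{\pm 1\}$ and $d_t$ either zero or a root (the ``otherwise'' cases give $\epsilon_t=1$, $d_t=0$). The product $\prod_t\epsilon_t$ equals $(-1)^{(\mathbf{w},\mathbf{b})}$ by definition of the sign. Using $\tX^\beta=q\cdot t_\beta X^\beta$ together with the Heisenberg commutation $X^\nu t_\mu=q^{(\mu,\nu)}t_\mu X^\nu$ to collect all translations on the left, we obtain
\[
\tg^+_{(\mathbf{w},\mathbf{b})}\,X^{-\lng w_l^{-1}\lambda}\,\tg^-_{(\mathbf{w},\mathbf{b})} \;=\; (-1)^{(\mathbf{w},\mathbf{b})}\,q^{D}\,t_{\wt(\mathbf{w},\mathbf{b})}\,X^{-\lng w_l^{-1}\lambda+\wt(\mathbf{w},\mathbf{b})}
\]
in $\heis$, for some integer $D$ that must be identified with $\deg(\mathbf{w},\mathbf{b})$.

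The main (and essentially only) task is this identification. Processing factors in the order $t=1,\ldots,l$ (from $\tg^-$), then inserting $X^{-\lng w_l^{-1}\lambda}$, then $t=l+1,\ldots,n$ (from $\tg^+$), each nontrivial step $t$ contributes $1+(d_t,\wt_{t-1})$ to $D$: the $+1$ from the explicit $q$ in $\tX^{d_t}=q\cdot t_{d_t}X^{d_t}$, and $(d_t,\wt_{t-1})$ from commuting the resulting $X^{d_t}$ past the already-accumulated translation $t_{\wt_{t-1}}$. The crossing of $X^{-\lng w_l^{-1}\lambda}$ past $t_{\wt_l}$ contributes the additional term $(-\lng w_l^{-1}\lambda,\wt_l)$, while the cross-terms $\pm(\wt_n-\wt_l,\wt_l)$ coming from merging the outputs of $\tg^-$ and $\tg^+$ cancel telescopically. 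Since $G$ is simply-laced, every nonzero $d_t$ is a root with $|d_t|^2=2$, so the $+1$ contributions may be rewritten uniformly as $|d_t|^2/2$; summing yields exactly the expression $\deg(\mathbf{w},\mathbf{b})$ defined in \S\ref{SS:DQW}.

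The proof concludes by applying Proposition~\ref{P:heis-act}:
\[
[\CO_{\bQG(w_n)}]\cdot t_{\wt(\mathbf{w},\mathbf{b})}\,X^{-\lng w_l^{-1}\lambda+\wt(\mathbf{w},\mathbf{b})} \;=\; [\CO_{\bQG(w_nt_{-\lng\wt(\mathbf{w},\mathbf{b})})}(-\lng w_l^{-1}\lambda+\wt(\mathbf{w},\mathbf{b}))],
\]
since the $q^{(\wt(\mathbf{w},\mathbf{b}),0)}=1$ prefactor from the $t$-action is trivial on the untwisted class $[\CO_{\bQG(w_n)}]$. Summing the resulting terms over $(\mathbf{w},\mathbf{b})\in\DQWalk_{\lambda,w}$ produces the right-hand side of \eqref{E:K-w-arb-dec}. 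The only potential obstacle is correctly tracking the $q$-powers through the Heisenberg collapse and spotting the telescopic cancellation of cross-terms; this is routine given the way $\deg(\mathbf{w},\mathbf{b})$ has been pre-engineered in \S\ref{SS:DQW}.
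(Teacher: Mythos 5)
Your proposal is correct and takes essentially the same route as the paper, whose proof is the one-line instruction to proceed from right to left in \eqref{E:K-w-arb-heis}, commute all translations to the left of the line bundle twists, and act on $[\CO_{\bQG(w_n)}]$; your write-up just makes the bookkeeping explicit, and the identification $D=\deg(\mathbf{w},\mathbf{b})$ checks out (each nontrivial step contributing $\tfrac{|d_t|^2}{2}+(d_t,\wt_{t-1})$ and the insertion of $X^{-\lng w_l^{-1}\lambda}$ contributing $(-\lng w_l^{-1}\lambda,\wt_l)$, exactly as $\deg$ is defined). The only cosmetic quibble is that the cross-term $(\wt_n-\wt_l,\wt_l)$ does not ``cancel''; it appears in both the collapsed product and in $\deg(\mathbf{w},\mathbf{b})$, being absorbed by the use of the full partial weights $\wt_{t-1}$ in the recursion for $\deg_t^+$.
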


\begin{proof}
Proceed from right to left in \eqref{E:K-w-arb-heis}, commuting all translations to the left of line bundle twists, and then act on $[\CO_{\bQG(w_n)}]$.
\end{proof}


\appendix

\section{The type $A$ case.}\label{A:A}

We briefly consider the type $A$ case. For this appendix, let $G=SL(n+1,\BC)$. 

\subsection{The root system of type $A$.} 

Let $\{\varepsilon_i : 1\le i\le n+1\}$ be the standard basis of $\BZ^{n+1}$.
We realize the weight lattice as $P=\BZ^{n+1}/\BZ(\varepsilon_1+\dotsm+\varepsilon_{n+1})$, and, by abuse of notation, we continue to denote the image of $\varepsilon_i$ in $P$ by the same symbol. 
Thus $\varpi_k := \varepsilon_1+\dotsm+\varepsilon_k$, for $k \in \{1, \ldots, n\}$, are the fundamental weights of $G$.

We set $\alpha_{i} := \ve_{i} - \ve_{i+1}$ for $i \in \{ 1, \ldots, n \}$ 
and $\alpha_{i, j} := \alpha_{i} + \alpha_{i+1} + \cdots + \alpha_{j}$ for $i, j \in \{1, \ldots, n\}$ with $i \le j$. 
Then the root system is $\Delta := \{ \pm \alpha_{i, j} \mid 1 \le i \le j \le n \}$, 
with the set of positive roots $\Delta^+ := \{ \alpha_{i, j} \mid 1 \le i \le j \le n \}$, 
and the set of simple roots $\{ \alpha_{1}, \ldots, \alpha_{n} \}$. 
 
We identify the Weyl group $W$ with the symmetric group $\mathfrak{S}_{n+1}$ in the usual way.
Regarding $w\in W$ as a permutation, we have $w \ve_{i} = \ve_{w(i)}$ for $i \in \{ 1, \ldots, n+1 \}$. 
Thus, for $i, j \in \{ 1, \ldots, n \}$ with $i \leq j$, the reflection $s_{\alpha_{i,j}}$ corresponds to the transposition $(i, j+1)\in \mathfrak{S}_{n+1}$. The longest element of $W$ is given by $\lng(i) = n+2-i$ for $i \in \{ 1, \ldots, n+1 \}$. 


\subsection{Generators.}

In type $A$, each fundamental weight $\varpi_k \ (k=1,\dotsc,n)$ is minuscule. Moreover, we have $\lng \varpi_k=-\varpi_{n+1-k}$. 
In fact, since the $\varepsilon_i$ generate $P$, the inverse Chevalley rule in $K_{H\times\BC^*}(\bQGr)$ is completely determined by Theorem~\ref{T:K-w-arb-dec} in the case of $\lambda=\pm \varepsilon_i$, where $1\le i\le n+1$. (That is, $\lambda$ belongs to the $W$-orbit of either $\varpi_1$ or $\varpi_n=-\varepsilon_{n+1}$.)

\subsection{Walks.}

Let us give an explicit description of the sequences of positive roots defining the sets $\Walk_{\lambda,w}$ and $\QWalk_{\lambda,w}$ in the case $\lambda=\varepsilon_{l+1}$ for $0\le l\le n$. (The case when $\lambda$ belongs to the $W$-orbit of $\varpi_n$ is similar, and can be obtained by a diagram automorphism.) In the setting of \S\ref{S:main-limit}, we have $k=1$ and
\begin{align}
\mcr{\lng}&=s_n \dotsm s_1\\
&=\underbrace{s_{n}\dotsm s_{l+1}}_{=y}\underbrace{s_l\dotsm s_1}_{=x}.
\end{align}
For any $w\in W$, we take the set of walks $\Walk_{\varepsilon_{l+1},w}=\Walk_w^{\vec{\eta}}$ for $\vec{\eta}=(\eta_1,\dotsc,\eta_n)=(\beta_l,\dotsc,\beta_1,\gamma_1,\dotsc,\gamma_m)$, where $m=\ell(y)=n-l$ and 
\begin{align}
\beta_r &=\varepsilon_r-\varepsilon_{l+1},\quad \text{for $1\le r\le l$},\\
\gamma_s &=\varepsilon_{l+1} - \varepsilon_{n+2-s},\quad \text{for $1\le s\le m$}.
\end{align}
Let us abbreviate our notation as follows: $\Walk_w^{(l)}=\Walk_{\varepsilon_{l+1},w}$ and $\QWalk_w^{(l)}=\QWalk_{\varepsilon_{l+1},w}$. 



\subsection{Special case: $w=w_\circ$.}

The following lemma describes the set $\QWalk^{(l)}_{w_\circ}$. Its proof is straightforward and is left to the reader.

\begin{lem}\label{L:QW-w0}
A walk $\mathbf{w}\in\Walk_{w_\circ}^{(l)}$ belongs to $\QWalk^{(l)}_{w_\circ}$
if and only if one of the following holds:
\begin{enumerate}
\item[(a)] $\mathbf{w}=(w_1,\dotsc,w_l,w_{l},\dotsc,w_{l})$ where $(w_1,\dotsc,w_{l})\in \Walk_{w_\circ}^{(\alpha_{l,l},\dotsc,\alpha_{1,l})}$
\item[(b)] $\mathbf{w}=(w_\circ,\dotsc,w_\circ,w_{l+1},\dotsc,w_n)$ where $(w_{l+1},\dotsc,w_n)\in\Walk_{w_\circ}^{(\alpha_{l+1,n},\dotsc,\alpha_{l+1,l+1})}$
\end{enumerate}
We have $S^-(\mathbf{w})=\varnothing$ in all cases, and $S^+(\mathbf{w})=\varnothing$ unless $l<n$ and $\mathbf{w}=(w_\circ,\dotsc,w_\circ)$, in which case $S^+(\mathbf{w})=\{n\}$. (Note that the two cases above share the walk $\mathbf{w}=(w_\circ,\dotsc,w_\circ)$.)
\end{lem}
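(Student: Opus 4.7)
The plan is to prove both implications by a combinatorial analysis in $\mathfrak{S}_{n+1}$, tracking the position of the value $l+1$ in the one-line notation of $w_t$ throughout the walk. The reflection $s_{\beta_r}$ in phase 1 is the transposition of the values $r$ and $l+1$, while $s_{\gamma_s}$ in phase 2 is the transposition of $l+1$ and $n+2-s$; apart from $l+1$ itself, the two phases touch disjoint sets of values. The length change caused by each non-stationary step can then be computed directly from its effect on inversions and compared with the Bruhat and quantum edge conditions of $\QBG(W)$.

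For the sufficiency direction, I would verify that walks of type (a) or (b) lie in $\QWalk_{w_\circ}^{(l)}$. Let $R'$ denote the set of indices $r$ for which a prior phase 1 step with label $\beta_r$ was non-stationary. In any non-stationary phase 1 step one swaps $l+1$ (at position $p=n+2-\min R'$, or $p=n+1-l$ when $R'=\varnothing$) with $r=l+1-t$ (at its unmoved position $q=n+2-r$), with $p<q$. A direct check shows that the positions strictly between $p$ and $q$ carry exactly the unmoved values in $\{r+1,\dotsc,\min R'-1\}$ (resp.\ $\{r+1,\dotsc,l\}$), so the inversion bookkeeping collapses to length decrease $2(q-p)-1$, matching the quantum edge condition with label $\alpha_{p,q-1}$. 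For case (b), the first non-stationary phase 2 step is treated identically and gives a quantum edge; any subsequent non-stationary phase 2 step has $p<s$, and only the pair $(p,s)$ contributes (giving $+1$), since the values at positions strictly between $p$ and $s$ all lie outside $\{l+2,\dotsc,n+1-s\}$ while the contributions for positions $k<p$ or $k>s$ cancel in pairs. Hence such a step is a Bruhat edge labeled by $\varepsilon_p-\varepsilon_s\in\Delta^+$.

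The main obstacle is the necessity direction. Given a walk $\mathbf{w}$ with non-stationary steps in both phases, consider the first non-stationary phase 2 step following at least one non-stationary phase 1 step. At this moment $w_{t-1}=w_l$, so $l+1$ sits at $p=n+2-r^\ast\ge n+2-l$ where $r^\ast=\min R'$, while $n+2-s$ is still at its untouched original position $s\le n-l<p$. Since no value in $\{l+2,\dotsc,n+1-s\}$ is displaced by phase 1, those $n-l-s$ values lie at positions strictly between $s$ and $p$, and the inversion count then yields length decrease $2(n-l-s)+1$. This matches the quantum edge condition $2(p-s)-1$ only when $r^\ast=l+1$, which is impossible since $r^\ast\le l$; Bruhat is excluded as well, the length having decreased. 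Hence the step is not a $\QBG(W)$ edge and $\mathbf{w}\notin\QWalk_{w_\circ}^{(l)}$.

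Finally, the statement on $S^\pm(\mathbf{w})$ reduces to a short calculation of $(\rho,w_{t-1}^{-1}\eta_t)$. In phase 1, the above tracking gives $w_{t-1}^{-1}\eta_t=\varepsilon_q-\varepsilon_p$ with $p<q$, so $(\rho,w_{t-1}^{-1}\eta_t)=-(q-p)\le -1$ and never $+1$; hence $S^-(\mathbf{w})=\varnothing$. In phase 2, $(\rho,w_{t-1}^{-1}\eta_t)=-1$ with $w_{t-1}^{-1}\eta_t=\varepsilon_p-\varepsilon_s$ forces $p=s+1>s$, which (since the position of $l+1$ is either $\max S'<s$ or $n+1-l$) is possible only with $p=n+1-l$ and $s=n-l$; this isolates exactly the all-stationary walk when $l<n$, for which $t=n\in S^+(\mathbf{w})$, and gives $S^+(\mathbf{w})=\varnothing$ otherwise.
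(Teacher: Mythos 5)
The paper gives no proof of this lemma (it is explicitly ``left to the reader''), so there is no argument of the authors to compare yours against; your plan is a correct and essentially complete way to supply the missing details. Tracking the position of the value $l+1$ in one-line notation is exactly the right device: phase~1 touches only the values $1,\dotsc,l+1$ and phase~2 only the values $l+1,\dotsc,n+1$, every untouched value stays at its original position, and your inversion counts check out against the edge conditions of $\QBG(W)$ --- a decrease of $2(q-p)-1$ for each non-stationary phase-1 step and for the first non-stationary phase-2 step out of $w_\circ$ (quantum edges), an increase of $1$ for each later phase-2 step in case (b) (Bruhat edges), and a decrease of $2(n-l-s)+1$, which is strictly smaller than the required $2(p-s)-1$ whenever $r^\ast\le l$, for a phase-2 step following a genuine phase-1 move (no edge). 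The one imprecision is in your final paragraph on $S^{+}(\mathbf{w})$: you assert that the position of $l+1$ at a phase-2 step is ``either $\max S'<s$ or $n+1-l$,'' which overlooks the case-(a) walks in which phase 1 has already displaced $l+1$ to a position $n+2-r^\ast\ge n+2-l$. The conclusion survives, since then
\begin{equation*}
(\rho,w_{t-1}^{-1}\eta_t)=s-(n+2-r^\ast)\le (n-l)-(n+2-l)=-2\ne -1,
\end{equation*}
so such steps contribute nothing to $S^{+}(\mathbf{w})$; but this case should be stated explicitly rather than folded into the dichotomy you wrote.
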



Using Lemma~\ref{L:QW-w0}, one can show the following (where we set $i=l+1$):

\begin{prop}\label{P:K-w0}
In $K_{H\times\BC^*}(\bQG)$ for $G=SL(n+1,\BC)$, one has
\begin{align}\label{E:K-w0}
e^{\varepsilon_{i}}\cdot[\CO_{\bQG(w_\circ)}]
&=
[\CO_{\bQG(w_\circ)}(-\varepsilon_{i})]-\mathbf{1}_{\{i<n+1\}}\cdot q\cdot [\CO_{\bQG(w_\circ t_{-w_\circ(\alpha_i)})}(-\varepsilon_{i+1})]\\
&\ \ +\sum_{\varnothing\neq\{i_1<\dotsm<i_a\}\subset\{1,\dotsc,i-1\}}(-1)^a [\CO_{\bQG((i_1\dotsm i_ai)^{-1}w_\circ t_{-w_\circ(\alpha_{i_1,i-1})})}(-\varepsilon_{i})]\notag\\
&\ \ +\sum_{\varnothing\neq\{j_1<\dotsm<j_{b}\}\subset \{i+1,n+1\}} (-1)^{b-1}q\cdot[\CO_{\bQG((i j_1\dotsm j_b)^{-1}w_\circ t_{-w_\circ(\alpha_{i,j_b-1})})}(-\varepsilon_{j_b})]\notag
\end{align}
where $1\le i\le n+1$ and
\begin{align}\label{E:indicator-def}
\mathbf{1}_{\{i<n+1\}}=\begin{cases} 1 &\text{if $i<n+1$}\\ 0 &\text{otherwise.}\end{cases}
\end{align}
\end{prop}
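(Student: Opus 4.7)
The plan is to apply Theorem~\ref{T:K-w-arb-dec} with $w=w_\circ$ and $\lambda=\varepsilon_i$ (so that $l=i-1$), enumerate the resulting decorated quantum walks via Lemma~\ref{L:QW-w0}, and match each family to one of the four groups of terms on the right-hand side of \eqref{E:K-w0}.

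First, I would dispose of the constant walk $\mathbf{w}=(w_\circ,\dots,w_\circ)$, which is shared between cases (a) and (b) of Lemma~\ref{L:QW-w0}. Since $S^-(\mathbf{w})=\varnothing$ always, while $S^+(\mathbf{w})=\{n\}$ precisely when $i<n+1$, there is one decoration (trivial) when $i=n+1$ and two decorations otherwise. The trivial decoration has $\wt=0$, $\deg=0$, sign $+1$, and together with $w_l=w_\circ$ produces the first summand $[\CO_{\bQG(w_\circ)}(-\varepsilon_i)]$ of \eqref{E:K-w0}. When $i<n+1$, the decoration $\mathbf{b}(n)=1$ yields weight $\alpha_i$, sign $-1$, degree $|\alpha_i|^2/2=1$, and shifts the line bundle twist by $+\alpha_i$ to $-\varepsilon_{i+1}$; this gives the second summand with the indicator $\mathbf{1}_{\{i<n+1\}}$.

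Second, for case (a) non-constant walks, I would parametrize by non-empty subsets $\{i_1<\cdots<i_a\}\subset\{1,\dots,i-1\}$: step $t$ (with $1\le t\le l$) is a move precisely when $l+1-t\in\{i_1,\dots,i_a\}$, and the corresponding root is $\alpha_{l+1-t,l}$; each move is a quantum descent. A telescoping computation with composed transpositions $(i_k,i)$ in $\mathfrak{S}_{n+1}$ gives $w_l=(i_1\cdots i_a i)^{-1}w_\circ$ and hence $w_\circ w_l^{-1}\varepsilon_i=\varepsilon_{i_1}$. Using the identity $|d_t|^2/2+(d_t,\wt_{t-1})=(|\wt_t|^2-|\wt_{t-1}|^2)/2$, the total degree collapses to $\deg=|\wt|^2/2+(-\varepsilon_{i_1},\wt_l)$. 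An induction on $a$ shows $\wt_l=\wt=\alpha_{i_1,l}=\varepsilon_{i_1}-\varepsilon_i$, yielding $\deg=1-1=0$. The sign is $(-1)^a$ (from $a$ descents in the left half; $S(\mathbf{w})=\varnothing$ precludes decoration choices), and applying Proposition~\ref{P:heis-act} produces the third line of \eqref{E:K-w0}.

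Finally, case (b) non-constant walks are parametrized by non-empty subsets $\{j_1<\cdots<j_b\}\subset\{i+1,\dots,n+1\}$ via $j_k=n+2-s_{b+1-k}$, where $s_1<\cdots<s_b$ are the move indices in the right half; here $w_l=w_\circ$. A direct computation---successively applying transpositions $(i,n+2-s_k)$ to $w_\circ$ and checking that at the $(k{+}1)$-st move the relevant root conjugates under the current element to the positive root $\varepsilon_{s_k}-\varepsilon_{s_{k+1}}$---shows that the first move (at $s_1$) is a quantum descent from $w_\circ$ while all subsequent moves are Bruhat ascents; this forces the sign to be $(-1)^{b-1}$. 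The weight comes entirely from the first descent, giving $\wt=\alpha_{i,j_b-1}=\varepsilon_i-\varepsilon_{j_b}$, so $\deg=|\wt|^2/2=1$ (the overall $q$ factor), and the endpoint is $w_n=(ij_1\cdots j_b)^{-1}w_\circ$. This yields the fourth line of \eqref{E:K-w0}. The main obstacle is the combinatorial verification that in case (b) exactly the first move is a descent (ensuring the correct sign), together with the careful bookkeeping showing that the $\tX$-factors in $\tg^\pm$ combine through the $q$-Heisenberg action of Proposition~\ref{P:heis-act} to produce precisely the translations and line bundle twists stated in \eqref{E:K-w0}.
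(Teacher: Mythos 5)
Your proposal is correct and is exactly the argument the paper intends: the paper derives Proposition~\ref{P:K-w0} by specializing Theorem~\ref{T:K-w-arb-dec} to $w=w_\circ$, $\lambda=\varepsilon_i$ and enumerating the decorated quantum walks via Lemma~\ref{L:QW-w0}, leaving the bookkeeping (which you carry out in detail — the cycle identities $w_l=(i_1\cdots i_a i)^{-1}w_\circ$ and $w_n=(ij_1\cdots j_b)^{-1}w_\circ$, the telescoping of weights and degrees, and the descent/ascent pattern giving the signs $(-1)^a$ and $(-1)^{b-1}$) to the reader. Your computations of the weights $\alpha_{i_1,i-1}$ and $\alpha_{i,j_b-1}$, the degrees $0$ and $1$, and the special handling of the constant walk with its single decoration in $S^+$ all check out.
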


\begin{rem}
The leading term $[\CO_{\bQG(w_\circ)}(-\varepsilon_i)]$ in \eqref{E:K-w0} recovers the ``classical'' analog of this formula in $K_H(G/B)$, while the remaining terms give the explicit corrections in $K_{H\times\BC^*}(\bQG)$.
\end{rem}

\begin{rem}\label{R:A}
Applying the diagram automorphism ``$-w_\circ$,'' which sends $[\CO_{\bQG(wt_\beta)}(\mu)]\mapsto[\CO_{\bQG(w_\circ ww_\circ t_{-w_\circ(\beta)})}(-w_\circ\mu)]$ and an equivariant scalar $e^\lambda\mapsto e^{-w_\circ\lambda}$, one obtains a similar formula for $e^{-\varepsilon_{n+2-i}}\cdot[\CO_{\bQG(\lng)}]$ in $K_{H\times\BC^*}(\bQG)$. By the observations in $\S\ref{SS:further-obs}$, the action of $e^\lambda$ on $K_{H\times\BC^*}(\bQG)$ for {\em any} $\lambda\in P$  is then completely determined by iteration of these two formulas together with the $D_i$ for $i\in I$. 
\end{rem}

\subsection{Symmetrization.}\label{SS:sym}

The usual $q$-Toda difference operators are realized geometrically by the ``spherical part'' of $\mathbb{K}$, which is the free $\heis$-submodule generated by $[\CO_{\bQG}]$. By \cite[Corollary 5.3]{Orr}, the action (of a spherical nil-affine Hecke subalgebra of $\HH_0$) on the spherical part is obtained by taking the $(e,e)$-entry of the corresponding matrix in $\Mat_W(\heis)$, namely:
\begin{align}\label{E:sph-act}
f(X)\cdot[\CO_{\bQG}]=[\CO_{\bQG}]\cdot\varrho_0(f)_{e,e}
\end{align}
for any symmetric Laurent polynomial $f=f(X)\in\mathbb{Z}[q^{\pm 1}][X]^W\subset\HH_0$.

Applying \eqref{E:sph-act} for $G=SL(n+1,\BC)$ to the symmetrization of $e^{\varpi_1}$, one obtains
\begin{align}
\sum_{i=1}^{n+1}e^{\varepsilon_i}\cdot[\CO_{\bQG}]=[\CO_{\bQG}]\cdot\varrho_0\Big(\sum_{i=1}^{n+1}X^{-\varepsilon_i}\Big)_{e,e}
\end{align}
and we can use \eqref{E:K-w-arb} to compute the right-hand side. One can easily show that just a single term contributes to the $(e,e)$-entry for each $i=1,\dotsc,n+1$. The contributing terms are those given by the stationary walk $(e,\dotsc,e)\in \QWalk^{(i-1)}_e$. Thus \eqref{E:K-w-arb} immediately results in the following:
\begin{align}\label{E:q-toda}
\sum_{i=1}^{n+1}e^{\varepsilon_i}\cdot[\CO_{\bQG}]&=[\CO_{\bQG}]\cdot\Big(X^{-w_\circ\varepsilon_1}+\sum_{i=2}^{n+1}X^{-w_\circ\varepsilon_i}(1-q\cdot t_{-w_\circ\alpha_{i-1}} X^{-w_\circ\alpha_{i-1}})\Big).
\end{align}
The element of $\heis$ acting on the right-hand side of \eqref{E:q-toda} is an equivalent form of the usual first order $q$-Toda difference operator, cf. \cite[(2)]{GL03}.

\section{Proof of Proposition~\ref{P:L}.}\label{A:L}

Recall our setting from Section~\ref{S:main-limit}. 

\subsection{Proof of \eqref{E:l-x-lng}.}

We know that 
\begin{equation*}
\rho-x^{-1}\rho = \sum_{\alpha \in \Inv (x^{-1})} \alpha
\end{equation*}
Because $\Inv (x^{-1}) \subset \DJp$, and 
because $\vpi_{k}$ is minuscule, we see that 
$(\alpha,\vpi_{k})=1$ for all $\alpha \in \Inv (x^{-1})$. 
Thus, 
\begin{equation*}
(\rho-x^{-1}\rho,\vpi_{k}) = 
\sum_{\alpha \in \Inv (x^{-1})} (\alpha,\vpi_{k})= \ell(x^{-1})=\ell(x),
\end{equation*}
and hence
$(\rho,\vpi_{k})-(\rho,\lambda) = \ell(x)$. 
Similarly, we have 
$(\rho,\vpi_{k})-(\rho,\mcr{\lng}\vpi_{k}) = \ell(\mcr{\lng})$. 
Because $(\rho,\mcr{\lng}\vpi_{k}) = (\rho,\lng\vpi_{k}) = 
- (\rho,\vpi_{k})$, we get $\ell(\mcr{\lng}) = 2(\rho,\vpi_{k})$. 
Therefore we obtain
\begin{equation*}
2\ell(x) - \ell(\mcr{\lng}) = 
2(\rho,\vpi_{k})-2(\rho,\lambda) - 2(\rho,\vpi_{k}) = -2(\rho,\lambda), 
\end{equation*}
as desired. 

\subsection{Proof of \eqref{E:rho-w-rho}.}

We know that $\rho-w\rho = \sum_{\gamma \in \Inv(w)} \gamma$.
Also, we note that $(\lambda,\gamma) \in \{0,\pm 1\}$ 
for all $\gamma \in \Delta$ since $\lambda$ is minuscule.  
Thus we obtain
\begin{equation*}
(\rho-w\rho,\lambda) = 
\sum_{\gamma \in \Inv(w)} 
\underbrace{ (\gamma,\lambda) }_{\in \{0,\pm 1\}} = 
\#\Inv(w)_{\lambda}^{+} - \#\Inv(w)_{\lambda}^{-} = 
\ell_{\lambda}^{+}(w) - \ell_{\lambda}^{-}(w),
\end{equation*}
as desired. 

\subsection{Proofs of \eqref{E:inv-la-} and \eqref{E:l-diff-}.}
%
%
\begin{lem} \label{lema}
For $\alpha,\beta \in \Delta$, we have 
$(\alpha,\beta) \in \bigl\{ 0, \pm 1, \pm 2\bigr\}$. 
Also, $(\alpha,\beta) = \pm 2$ if and only if $\alpha=\pm \beta$. 
\end{lem}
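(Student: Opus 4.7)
The plan is to exploit the simply-laced assumption together with the standard integrality of Cartan integers. Under the identification of roots and coroots via the normalized form $(\cdot,\cdot)$ with $(\alpha,\alpha)=2$ for all $\alpha\in\Delta$, one has $\pair{\alpha}{\beta^{\vee}}=(\alpha,\beta)$, which is an integer by the axioms of a root system. Hence $(\alpha,\beta)\in\BZ$.

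Next, I would bound $|(\alpha,\beta)|$ by the Cauchy--Schwarz inequality:
\begin{equation*}
(\alpha,\beta)^{2} \le (\alpha,\alpha)(\beta,\beta) = 4,
\end{equation*}
so $(\alpha,\beta)\in\{0,\pm 1,\pm 2\}$, giving the first claim.

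For the equality characterization, recall that Cauchy--Schwarz attains equality if and only if $\alpha$ and $\beta$ are linearly dependent in $\mathfrak{h}^{\ast}$. Since the root system is reduced, the only scalar multiples of $\beta$ lying in $\Delta$ are $\pm\beta$. Thus $(\alpha,\beta)=\pm 2$ forces $\alpha=\pm\beta$; conversely, if $\alpha=\pm\beta$, then $(\alpha,\beta)=\pm(\beta,\beta)=\pm 2$.

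There is essentially no obstacle here: the whole argument is a two-line consequence of the simply-laced normalization and Cauchy--Schwarz, with the reducedness of $\Delta$ used only to rule out other proportional roots. One could alternatively cite the classical fact that $\pair{\alpha}{\beta^{\vee}}\pair{\beta}{\alpha^{\vee}}\in\{0,1,2,3,4\}$ with the value $4$ only when $\alpha=\pm\beta$, but the Cauchy--Schwarz route is more direct in the simply-laced setting.
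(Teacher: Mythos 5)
Your argument is correct, but it is not the route the paper takes. You bound $(\alpha,\beta)$ by integrality of the Cartan integer $\pair{\alpha}{\beta^{\vee}}=(\alpha,\beta)$ together with the Cauchy--Schwarz inequality $(\alpha,\beta)^{2}\le(\alpha,\alpha)(\beta,\beta)=4$, and you handle the extremal case via the equality criterion (linear dependence) plus reducedness of $\Delta$. The paper instead argues by contradiction with root strings: if $\alpha\ne\pm\beta$ and $a:=(\alpha,\beta)\le -2$, then $s_{\beta}(\alpha)=\alpha-a\beta\in\Delta$ forces $\alpha+\beta\in\Delta$ by the $\beta$-string through $\alpha$, yet $(\alpha+\beta,\alpha+\beta)=4+2a\le 0$, which is impossible since every root has squared length $2$; the case $a\ge 2$ reduces to this by replacing $\beta$ with $-\beta$. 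The trade-off is mild: your proof is shorter and more linear-algebraic, but it quietly invokes positive-definiteness of $(\cdot,\cdot)$ on the real span of the roots (needed both for Cauchy--Schwarz and for its equality case), which is standard but worth a word since the paper defines the form on $\Fh^{\ast}$ over $\BC$. The paper's proof stays entirely inside root-system combinatorics and uses only the normalization $(\gamma,\gamma)=2$ for $\gamma\in\Delta$, at the cost of invoking the root-string property. Either proof is acceptable here.
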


\begin{proof}
Let $\alpha,\beta \in \Delta$ be such that 
$\alpha \ne \pm \beta$, and set
$a:=(\alpha,\beta)$. 
Suppose for a contradiction that $a \le -2$. 
Because $s_{\beta}(\alpha) = \alpha - a \beta \in \Delta$, 
it follows from the property of the $\beta$-string 
through $\alpha$ that $\alpha+\beta \in \Delta$. 
However, $(\alpha+\beta,\alpha+\beta) = (\alpha,\alpha)+2(\alpha,\beta)+
(\beta,\beta) = 4+2a \le 0$, which is a contradiction. 
Suppose for a contradiction that $a \ge 2$. If we set $\gamma:=-\beta$, 
then $\alpha \ne \pm \gamma$, and $(\alpha,\gamma) = -a \le -2$, 
which is a contradiction. Therefore we conclude that 
$(\alpha,\beta) \in \bigl\{0,\pm 1,\pm 2\bigr\}$ for all 
$\alpha,\beta \in \Delta$ such that $\alpha \ne \pm \beta$, as desired. 
\end{proof}

%
\begin{lem} \label{lemb}
\mbox{}
\begin{enu}
\item It holds that $(\lambda,\beta_{r}) = -1$ for all $1 \le r \le l$. 

\item It holds that $(\beta_{r},\beta_{t}) \in \bigl\{0,1\bigr\}$ 
for all $1 \le r,t \le l$ with $r \ne t$. 
\end{enu}
\end{lem}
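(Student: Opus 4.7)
My plan is to treat the two parts separately, with both reducing to the key observation that, because $\varpi_k$ is minuscule, every root $\gamma \in \DJp$ has coefficient exactly $1$ on the simple root $\alpha_k$ (and hence satisfies $(\varpi_k,\gamma)=1$), whereas roots outside $\Delta_J^+$ have $\alpha_k$-coefficient in $\{-1,0,1\}$. Both claims will follow from combining this with the fact that $x\in W^J$ forces $\Inv(x^{-1})\subset\DJp$ (equivalently, $x^{-1}(\Delta_J^+)\subset\Delta^+$).

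For part (i), I would start from the reduced expression $x^{-1}=s_{j_1}\cdots s_{j_l}$ and recall that this gives $\Inv(x^{-1})=\{\gamma_r^{\,\prime}:1\le r\le l\}$ where $\gamma_r^{\,\prime}:=s_{j_1}\cdots s_{j_{r-1}}(\alpha_{j_r})$. A direct telescoping computation using the reduced expression of $x$ yields $x^{-1}\beta_r = -\gamma_r^{\,\prime}$. Then
\[
(\lambda,\beta_r) = (x\varpi_k,\beta_r) = (\varpi_k,x^{-1}\beta_r) = -(\varpi_k,\gamma_r^{\,\prime}).
\]
Since $\gamma_r^{\,\prime}\in\Inv(x^{-1})\subset\DJp$ and $\varpi_k$ is minuscule, $(\varpi_k,\gamma_r^{\,\prime})=1$, giving the claim.

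For part (ii), since $\beta_r,\beta_t$ are distinct positive roots we have $\beta_r\ne\pm\beta_t$, and Lemma~\ref{lema} immediately restricts $(\beta_r,\beta_t)\in\{0,\pm1\}$. The only work is to rule out the value $-1$. Suppose for contradiction that $(\beta_r,\beta_t)=-1$. In the simply-laced setting, the reflection formula gives $s_{\beta_t}(\beta_r)=\beta_r+\beta_t$, which must therefore be a root. But $\beta_r,\beta_t\in\Inv(x)\subset\DJp$, so each has $\alpha_k$-coefficient equal to $1$ by the minuscule property, forcing $\beta_r+\beta_t$ to have $\alpha_k$-coefficient $2$. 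This contradicts the minuscule bound, so $(\beta_r,\beta_t)\in\{0,1\}$.

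The main step requiring care is the identification $x^{-1}\beta_r=-\gamma_r^{\,\prime}$ in part (i); this is a purely formal manipulation of reduced expressions, but it is the point where the definitions of $\beta_r$ and of $\Inv(x^{-1})$ from the two reduced expressions must be matched correctly. The rest is a direct application of the two facts about minuscule weights recalled at the outset.
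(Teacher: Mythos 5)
Your part (i) is correct and is essentially the paper's argument: both reduce $(\lambda,\beta_r)$ to $\pm(\vpi_k,\gamma)$ for a root $\gamma\in\Inv(x^{-1})\subset\DJp$ and then invoke minusculity of $\vpi_k$; your telescoping identity $x^{-1}\beta_r=-\gamma_r'$ is the same computation the paper performs (there written as $(\lambda,\beta_r)=(\vpi_k,v^{-1}\alpha_{j_r})$ with $v=s_{j_r}\cdots s_{j_1}$).

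Part (ii), however, has a genuine gap. You rule out $(\beta_r,\beta_t)=-1$ by asserting $\Inv(x)\subset\DJp$, so that each $\beta_r$ has $\alpha_k$-coefficient $1$ and $\beta_r+\beta_t$ would have $\alpha_k$-coefficient $2$. But the inclusion that $x\in\WJ$ actually yields --- and which you correctly state at the outset --- is $\Inv(x^{-1})\subset\DJp$, not $\Inv(x)\subset\DJp$. The latter is false in general: in type $A_2$ with $k=1$, $J=\{2\}$, and $x=s_2s_1\in\WJ$, one has $\Inv(x)=\{\alpha_2,\ \alpha_1+\alpha_2\}$, which contains $\alpha_2\in\Delta_J^{+}$; here $\beta_2=\alpha_2$ has $\alpha_1$-coefficient $0$, so your premise fails and $\beta_r+\beta_t$ need not have $\alpha_k$-coefficient $2$. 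The repair is to pair against $\lambda$ rather than against $\vpi_k$: by your part (i), $(\lambda,\beta_r)=(\lambda,\beta_t)=-1$, so if $\beta_r+\beta_t$ were a root one would get $(\lambda,\beta_r+\beta_t)=-2$, contradicting the fact that $\lambda=x\vpi_k$, being $W$-conjugate to a minuscule weight, is itself minuscule. This is exactly the paper's proof of (ii), and with that substitution your argument goes through.
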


\begin{proof}
(1) We see that 
\begin{align*}
(\lambda,\beta_{r}) & = 
(s_{j_{l}}s_{j_{l-1}} \cdots s_{j_{2}}s_{j_{1}}\vpi_{k},\,
 s_{j_{l}}s_{j_{l-1}} \cdots s_{j_{r+1}}(\alpha_{j_{r}}) ) \\
& = 
(\underbrace{ s_{j_{r}}s_{j_{r-1}} \cdots s_{j_{2}}s_{j_{1}} }_{=:v}
 \vpi_{k},\,\alpha_{j_{r}}) = (\vpi_{k},v^{-1}\alpha_{j_{r}}). 
\end{align*}
Since $v=s_{j_{r}}s_{j_{r-1}} \cdots s_{j_{2}}s_{j_{1}} \in \WJ$ 
with $s_{j_{r}}v  < v$, it follows that 
$v^{-1}\alpha_{j_{r}} \in -(\DJp)$. Hence, 
$(\vpi_{k},v^{-1}\alpha_{j_{r}}) < 0$; since $\vpi_{k}$ is minuscule, 
we obtain $(\vpi_{k},v^{-1}\alpha_{j_{r}}) = -1$, as desired. 

(2) By Lemma~\ref{lema}, we have 
$(\beta_{r},\beta_{t}) \in \bigl\{0,\pm 1\bigr\}$. 
Suppose, for a contradiction, that 
$(\beta_{r},\beta_{t}) = -1$. Then, $s_{\beta_{r}}(\beta_{t}) = 
\beta_{r}+\beta_{t} \in \Delta$. By (1), we have 
$(\lambda,\beta_{r}+\beta_{t}) = -2$, which contradicts 
the assumption that $\vpi_{k}$ is minuscule. 
Thus we have proved the lemma. 
\end{proof}

For $\alpha = \sum_{i \in I}c_{i}\alpha_{i} \in Q$, 
we set $\Ht(\alpha):=\sum_{i \in I} c_{i} \in \BZ$. 
%
%
\begin{lem} \label{lem2}
Let $1 \le r < t \le l$. 
If $(\beta_{r},\beta_{t})=1$, 
then $\Ht(\beta_{r}) > \Ht(\beta_{t})$. 
\end{lem}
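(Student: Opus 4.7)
The plan is to show that $\beta_{r} - \beta_{t} \in \Delta^{+}$, which, together with the fact that $\beta_{r} - \beta_{t}$ is a root (by the simply-laced hypothesis and the assumption $(\beta_{r},\beta_{t})=1$), immediately yields $\Ht(\beta_{r}) > \Ht(\beta_{t})$.

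We transport the problem to the ``dual'' enumeration of $\Inv(x^{-1})$. Set
\[
\tilde{\beta}_{r} := s_{j_{1}} s_{j_{2}} \cdots s_{j_{r-1}}(\alpha_{j_{r}}),\qquad 1\le r\le l,
\]
so that $\{\tilde{\beta}_{1},\dots,\tilde{\beta}_{l}\} = \Inv(x^{-1})$. A direct cancellation using $x = s_{j_{l}} \cdots s_{j_{1}}$ gives $x^{-1}(\beta_{r}) = -\tilde{\beta}_{r}$; hence $(\tilde{\beta}_{r},\tilde{\beta}_{t}) = (\beta_{r},\beta_{t}) = 1$, and, writing $c_{k}(\alpha)$ for the coefficient of $\alpha_{k}$ in $\alpha$,
\[
c_{k}(\tilde{\beta}_{r}) = (\vpi_{k},\tilde{\beta}_{r}) = -(x\vpi_{k},\beta_{r}) = -(\lambda,\beta_{r}) = 1,
\]
by Lemma~\ref{lemb}(1). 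Assume, for contradiction, that $\gamma := \tilde{\beta}_{r} - \tilde{\beta}_{t} \in \Delta^{+}$. Setting $u := s_{j_{1}}\cdots s_{j_{t-1}}$, we have $\tilde{\beta}_{r} \in \Inv(u)$ (because $r<t$), so $u^{-1}(\tilde{\beta}_{r}) \in -\Delta^{+}$; and by construction $u^{-1}(\tilde{\beta}_{t}) = \alpha_{j_{t}}$. Thus $u^{-1}(\gamma) \in -\Delta^{+}$, forcing $\gamma \in \Inv(u) = \{\tilde{\beta}_{1},\dots,\tilde{\beta}_{t-1}\}$. Writing $\gamma = \tilde{\beta}_{s}$ yields $\tilde{\beta}_{r} = \tilde{\beta}_{s} + \tilde{\beta}_{t}$, and hence $c_{k}(\tilde{\beta}_{r}) = c_{k}(\tilde{\beta}_{s}) + c_{k}(\tilde{\beta}_{t}) = 2$, contradicting $c_{k}(\tilde{\beta}_{r}) = 1$. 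This step uses minusculeness of $\vpi_{k}$ decisively, since it forces $c_{k}(\alpha) \in \{0,1\}$ for every $\alpha \in \Delta^{+}$.

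Consequently $\tilde{\beta}_{t} - \tilde{\beta}_{r} \in \Delta^{+}$, and since its $\alpha_{k}$-coefficient is zero, it lies in $\DJs$. Applying $x$ and using $x(\DJs) \subset \Delta^{+}$ (which holds for $x \in \WJ$ by the defining property of minimal coset representatives), we obtain $\beta_{r} - \beta_{t} = x(\tilde{\beta}_{t} - \tilde{\beta}_{r}) \in \Delta^{+}$, as required. The main obstacle I anticipate is recognizing the passage to the dual enumeration $\{\tilde{\beta}_{r}\}$: once one observes that its members all satisfy $c_{k} = 1$, the biconvexity step $\gamma \in \Inv(u)$ combined with the minuscule constraint on $\alpha_{k}$-coefficients delivers the contradiction essentially automatically.
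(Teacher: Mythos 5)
Your proof is correct, and it takes a genuinely different route from the paper's. The paper argues by induction on $l=\ell(x)$: it peels off the last letter $s_{j_l}$, sets $\beta'_u=s_{j_l}\beta_u$, applies the inductive hypothesis to $x'=s_{j_l}x$, and then does a case analysis on the pairings $(\beta'_r,\alpha_{j_l}),(\beta'_t,\alpha_{j_l})\in\{0,-1\}$, with the delicate case $a=0,\ b=-1$ handled by observing that $\beta_r-\beta_t$ would otherwise be a root of height zero. Your argument is direct and non-inductive: you transport to $\tilde\beta_u=-x^{-1}\beta_u\in\Inv(x^{-1})$, where minusculeness manifests as $c_k(\tilde\beta_u)=1$ for all $u$; you then use that $\Inv(u)$ for the prefix $u=s_{j_1}\cdots s_{j_{t-1}}$ is exactly $\{\tilde\beta_1,\dots,\tilde\beta_{t-1}\}$ to force $\tilde\beta_r-\tilde\beta_t$, if positive, to be some $\tilde\beta_s$, which violates $c_k=1$; finally $x(\Delta^+_J)\subset\Delta^+$ (the defining property of $x\in W^J$) converts $\tilde\beta_t-\tilde\beta_r\in\Delta^+_J$ into $\beta_r-\beta_t\in\Delta^+$. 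All the individual steps check out (in particular $u^{-1}(\gamma)$ is a negative root because it is a root equal to a negative root minus a simple root, and a positive root with vanishing $\alpha_k$-coefficient does lie in $\Delta_J^+$). What your approach buys is brevity and a clean isolation of where the minuscule hypothesis enters; what the paper's induction buys is uniformity with the proof of its Lemma~\ref{lem3}, which is established by the very same inductive peeling and reuses the intermediate objects $\beta'_u$.
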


\begin{proof}
We show the assertion by induction on $l=\ell(x)$. 
If $l=0$ or $l=1$, then the assertion is obvious. 
Assume that $l > 1$. If $t = l$, then 
$\beta_{t} = \beta_{l} = \alpha_{j_{l}}$, and 
\begin{equation*}
s_{j_{l-1}} \cdots s_{j_{r+1}}(\alpha_{j_{r}}) = 
s_{j_{l}} \beta_{r} = s_{\beta_{t}} \beta_{r} = 
\beta_{r}-\beta_{t}.
\end{equation*}
Thus, 
$\beta_{r} = \beta_{t} + s_{j_{l-1}} \cdots s_{j_{r+1}}(\alpha_{j_{r}})$; 
note that $s_{j_{l-1}} \cdots s_{j_{r+1}}(\alpha_{j_{r}}) \in \Delta^{+}$ 
since $s_{j_{l-1}} \cdots s_{j_{r+1}}s_{j_{r}}$ is reduced. 
Therefore we obtain $\Ht(\beta_{r}) > \Ht(\beta_{t})$.
 
Assume that $t \le l-1$. Notice that 
$x':=s_{j_{l}}x \in \WJ$ with $\ell(x') = \ell(x)-1$, and 
$x'=s_{j_{p-1}} \cdots s_{j_{2}}s_{j_{1}}$ is 
a reduced expression of $x'$. Define 
\begin{equation*}
\beta_{u}':=s_{j_{l-1}}
\cdots s_{j_{u+1}}(\alpha_{j_{u}}) = s_{j_{l}}\beta_{u} \qquad 
\text{for $1 \le u \le l-1$}. 
\end{equation*}
Because $1 \le r < t \le l-1$ satisfy the condition that 
$(\beta_{r}',\beta_{t}') = (\beta_{r},\beta_{t})=1$, 
it follows by the induction hypothesis that 
$\Ht(\beta_{r}') > \Ht(\beta_{t}')$. 
Here we remark that 
\begin{equation*}
\beta_{r} = \beta_{r}' - 
\underbrace{(\beta_{r}',\alpha_{j_{l}})}_{=:a} \alpha_{j_{l}}, \qquad 
\beta_{t} = \beta_{t}' - 
\underbrace{(\beta_{t}',\alpha_{j_{l}})}_{=:b} \alpha_{j_{l}},
\end{equation*}
where $a,\,b \in \{0,-1\}$ by Lemma~\ref{lemb}\,(2). 
If $a=-1$, or if $a=b=0$, then it is obvious that 
$\Ht(\beta_{r}) > \Ht(\beta_{t})$ 
since $\Ht(\beta'_{r}) > \Ht(\beta'_{t})$ 
by the induction hypothesis. 
Assume that $a=0$ and $b=-1$. 
Suppose, for a contradiction, that 
$\Ht(\beta_{r}) \le \Ht(\beta_{t})$. 
Since $\Ht(\beta_{t}) = \Ht(\beta'_{t}) + 1$ and 
$\Ht(\beta_{r}) = \Ht(\beta'_{r})$, and since 
$\Ht(\beta'_{r}) > \Ht(\beta'_{t})$ by the induction hypothesis, 
we have 
$\Ht(\beta_{r}) = \Ht(\beta_{t})$. 
Since $(\beta_{r},\beta_{t})=1$ by assumption, 
we see that $\beta_{r}-\beta_{t} \in \Delta$. 
However, $\Ht(\beta_{r}-\beta_{t}) =\Ht(\beta_{r})-\Ht(\beta_{t})=0$, 
which is a contradiction. Thus we get 
$\Ht(\beta_{r}) >  \Ht(\beta_{t})$, as desired. 
%
%
Thus we have proved Lemma~\ref{lem2}. 
\end{proof}
%
%
\begin{lem} \label{lem3}
Let $1 \le r \le l$, and set $h:=\Ht(\beta_{r})$. Then, 
\begin{align}
& \# \bigl\{ r < t \le l \mid (\beta_{t},\beta_{r})=1\bigr\}=h-1, 
  \label{eq3-1} \\
& \# \bigl\{ \alpha \in \Delta^{+} \mid 
\Ht(\alpha) < \Ht(\beta_{r}),\,
(\alpha,\beta_{r})=1 \bigr\} = 2(h-1). \label{eq3-2}
\end{align}
\end{lem}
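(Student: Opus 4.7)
The plan is to prove \eqref{eq3-2} first, by a direct counting/pairing argument on positive roots, and then to derive \eqref{eq3-1} by intersecting with $\Inv(x)$ and applying a second involution together with the biconvexity of $\Inv(x)$.

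For \eqref{eq3-2}, I would start from
$\sum_{\alpha \in \Delta^+}(\alpha, \beta_r) = 2(\rho, \beta_r) = 2h$. By Lemma~\ref{lema} the summands lie in $\{0, \pm 1, \pm 2\}$, with value $2$ attained only at $\alpha = \beta_r$. Hence $2h = 2 + N_+ - N_-$, where $N_\pm = \#\{\alpha \in \Delta^+ \setminus \{\beta_r\} : (\alpha, \beta_r) = \pm 1\}$. For such $\alpha$, the string property shows $\alpha - \beta_r$ is a nonzero root, so its height is nonzero; consequently every $\alpha \in \Delta^+$ with $(\alpha, \beta_r) = 1$ and $\alpha \neq \beta_r$ satisfies $\Ht(\alpha) > h$ or $\Ht(\alpha) < h$. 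The map $\alpha \mapsto \alpha - \beta_r$ is then a bijection from $\{\alpha \in \Delta^+ : (\alpha, \beta_r) = 1,\ \Ht(\alpha) > h\}$ onto $\{\gamma \in \Delta^+ : (\gamma, \beta_r) = -1\}$, with inverse $\gamma \mapsto \gamma + \beta_r$. Writing $A := \{\alpha \in \Delta^+ : (\alpha, \beta_r) = 1,\ \Ht(\alpha) < h\}$, this gives $N_+ = |A| + N_-$, and hence $|A| = N_+ - N_- = 2(h-1)$, proving \eqref{eq3-2}.

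For \eqref{eq3-1}, set $B := \{\beta_t : r < t \le l,\ (\beta_t, \beta_r) = 1\}$. Using Lemma~\ref{lem2} (which forces $\Ht(\beta_t) < h$ for $r < t$, and $\Ht(\beta_t) > h$ for $t < r$ with $(\beta_t, \beta_r) = 1$) together with $(\beta_r, \beta_r) = 2 \ne 1$ to rule out $t = r$, both inclusions give $B = A \cap \Inv(x)$. To compute $|A \cap \Inv(x)|$, observe that $\sigma(\alpha) := \beta_r - \alpha$ is a fixed-point-free involution on $A$: one checks $(\beta_r - \alpha, \beta_r) = 1$ and $\Ht(\beta_r - \alpha) = h - \Ht(\alpha) \in (0, h)$, and $\beta_r - \alpha \in \Delta$ by the string property, with no fixed points because $\beta_r/2 \notin \Delta$. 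For each $\sigma$-orbit $\{\alpha, \beta_r - \alpha\} \subset A$, I would then argue that \emph{exactly one} element lies in $\Inv(x)$, by the two complementary facts: at most one, since otherwise Lemma~\ref{lemb}(1) would force $(\lambda, \alpha) + (\lambda, \beta_r - \alpha) = -2$, contradicting $(\lambda, \beta_r) = -1$; at least one, by biconvexity of $\Inv(x)$ (a standard property of inversion sets in Coxeter groups) applied to the decomposition $\alpha + (\beta_r - \alpha) = \beta_r \in \Inv(x)$. Dividing the $2(h-1)$ elements of $A$ into $h-1$ such orbits yields $|B| = h - 1$.

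The main obstacle is precisely the two-sided argument above: the minuscule hypothesis on $\lambda$ furnishes the upper bound $(\lambda, \cdot) \in \{0, \pm 1\}$ needed for the ``at most one'' half, while the ``at least one'' half rests on the biconvexity of $\Inv(x)$, which in particular forces $\beta_r \in \Inv(x)$ to split as a sum in $\Inv(x)$ whenever it decomposes as $\alpha + (\beta_r - \alpha)$ with both summands positive roots. Once this interplay is in place, the counting reduces to halving the cardinality established in \eqref{eq3-2}, which is why the proof of \eqref{eq3-1} naturally follows \eqref{eq3-2} rather than preceding it.
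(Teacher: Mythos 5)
Your proof is correct, but it takes a genuinely different route from the paper's. The paper proves \eqref{eq3-1} and \eqref{eq3-2} simultaneously by induction on $l=\ell(x)$, peeling off $s_{j_l}$ and running a case analysis on $(\beta_r',\alpha_{j_l})\in\{0,-1\}$, with explicit bijections $s_{j_l}\colon R\setminus\{\alpha_{j_l},\beta_r-\alpha_{j_l}\}\to S$ between the sets attached to $x$ and $x'=s_{j_l}x$. You instead prove \eqref{eq3-2} by a global, induction-free count valid for an \emph{arbitrary} positive root in a simply-laced system: $\sum_{\alpha\in\Delta^+}(\alpha,\beta_r)=2(\rho,\beta_r)=2h$ together with the height-shifting bijection $\alpha\mapsto\alpha-\beta_r$ isolates the roots of height $<h$ pairing to $1$, which nicely exposes the fact that \eqref{eq3-2} has nothing to do with $x$ or minusculity. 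You then deduce \eqref{eq3-1} from \eqref{eq3-2} via the fixed-point-free involution $\sigma(\alpha)=\beta_r-\alpha$, showing exactly one element of each orbit lies in $\Inv(x)$: at most one by Lemma~\ref{lemb}\,(1) and minusculity, at least one by biconvexity of inversion sets (the complement $\Delta^+\setminus\Inv(x)$ is closed under addition — standard, though not stated in the paper). This reverses the paper's logic: there, the partition $R_r=B_r\sqcup\sigma(B_r)$ appears in Remark~\ref{remb} as a \emph{consequence} of Lemma~\ref{lem3}, whereas you use the "exactly one per orbit" statement as the engine that proves \eqref{eq3-1}. Your identification $B=A\cap\Inv(x)$ via Lemma~\ref{lem2} is the same input the paper uses. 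What your approach buys is the elimination of the induction and its case analysis, at the modest cost of importing one standard Coxeter-theoretic fact.
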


\begin{proof}
We show the assertion by induction on $l=\ell(x)$. 
If $l=0$ or $l=1$, then the assertion is obvious. 
Assume that $l > 1$. If $r = l$, then the assertion is obvious. 
Assume that $1 \le r \le l-1$. Notice that 
$x':=s_{j_{l}}x \in \WJ$ 
with $\ell(x') = \ell(x)-1$, and 
$x'=s_{j_{l-1}} \cdots s_{j_{2}}s_{j_{1}}$ is 
a reduced expression of $x'$. Define 
\begin{equation*}
\beta_{u}':=s_{j_{l-1}}
\cdots s_{j_{u+1}}(\alpha_{j_{u}}) = s_{j_{l}}\beta_{u} \qquad 
\text{for $1 \le u \le l-1$}. 
\end{equation*}
Since $\beta_{l}=\alpha_{j_{l}}$, 
we see by Lemma~\ref{lemb}\,(2) that 
$(\beta_{r}', \alpha_{j_{l}}) = - (\beta_{r},\alpha_{j_{l}}) = 
- (\beta_{r},\beta_{l}) \in \{0,-1\}$. 

\paragraph{Case 1.}
%
Assume first that $(\beta_{r}', \alpha_{j_{l}}) = 0$, 
or equivalently, $(\beta_{r}, \alpha_{j_{l}}) = 0$; 
in this case, $\beta_{r}=\beta_{r}'$, and hence 
$\Ht(\beta_{r}')=\Ht(\beta_{r})=h$. 
Also, since $\beta_{l}=\alpha_{j_{l}}$, it follows that 
\begin{align*}
\bigl\{ r < t \le l \mid (\beta_{t},\beta_{r})=1\bigr\} & = 
\bigl\{ r < t \le l-1 \mid (\beta_{t},\beta_{r})=1\bigr\} \\
& = \bigl\{ r < t \le l-1 \mid 
(s_{j_{l}}\beta_{t},s_{j_{l}}\beta_{r})=1\bigr\} \\
& = \bigl\{ r < t \le l-1 \mid 
(\beta_{t}',\beta_{r}')=1\bigr\}. 
\end{align*}
By the induction hypothesis, 
we have $\# \bigl\{ r < t \le l-1 \mid 
(\beta_{t}',\beta_{r}')=1\bigr\} = h-1$, and hence 
$\# \bigl\{ r < t \le l \mid (\beta_{t},\beta_{r})=1\bigr\} =h-1$. 
Moreover, since $\beta_{r}=\beta_{r}'$ in Case 1, 
it is obvious that 
\begin{align*}
& 
\bigl\{ \alpha \in \Delta^{+} \mid 
\Ht(\alpha) < \Ht(\beta_{r}),\,
(\alpha,\beta_{r})=1\bigr\} \\
& = 
\bigl\{ \alpha \in \Delta^{+} \mid 
\Ht(\alpha) < \Ht(\beta'_{r}),\,
(\alpha,\beta'_{r})=1\bigr\}. 
\end{align*}
Since $\# \bigl\{ \alpha \in \Delta^{+} \mid 
\Ht(\alpha) < \Ht(\beta'_{r}),\,
(\alpha,\beta'_{r})=1\bigr\}= 2(h-1)$ by the induction hypothesis, 
we obtain \eqref{eq3-2}, as desired. 

\paragraph{Case 2.}
%
Assume next that 
$(\beta'_{r}, \alpha_{j_{l}}) = -1$, or equivalently, 
$(\beta_{r}, \alpha_{j_{l}}) = 1$;  
in this case, $\beta_{r}=\beta'_{r}+\alpha_{j_{l}}$, and hence 
$\Ht(\beta'_{r})=\Ht(\beta_{r})-1=h-1$. 
Also, since $\beta_{l}=\alpha_{j_{l}}$, it follows that 
\begin{align*}
\bigl\{ r < t \le l \mid (\beta_{t},\beta_{r})=1\bigr\} & = 
\bigl\{ r < t \le l-1 \mid (\beta_{t},\beta_{r})=1\bigr\} \cup \{l\} \\
& = \bigl\{ r < t \le l-1 \mid 
(s_{j_{l}}\beta_{t},s_{j_{l}}\beta_{r})=1\bigr\} \cup \{l\} \\
& = \bigl\{ r < t \le l-1 \mid 
(\beta_{t}',\beta_{r}')=1\bigr\} \cup \{l\}. 
\end{align*}
By the induction hypothesis, we have 
$\# \bigl\{ r < t \le l-1 \mid (\beta_{t}',\beta_{r}')=1\bigr\} = h-2$, 
and hence 
$\# \bigl\{ r < t \le l \mid (\beta_{t},\beta_{r})=1\bigr\} = h-1$. 

Now, let us prove \eqref{eq3-2} in Case 2. 
For simplicity of notation, we set
\begin{equation} \label{eq:RS}
\begin{split}
& R=R_{r}:=
\bigl\{ \alpha \in \Delta^{+} \mid 
\Ht(\alpha) < \Ht(\beta_{r}),\,
(\alpha,\beta_{r})=1\bigr\}, \\
& S=S_{r}:=
\bigl\{ \alpha \in \Delta^{+} \mid 
\Ht(\alpha) < \Ht(\beta'_{r}),\,
(\alpha,\beta'_{r})=1\bigr\}.
\end{split}
\end{equation}
Because $(\beta_{r}, \alpha_{j_{l}}) = 1$, and 
$\beta_{r}-\alpha_{j_{l}} = 
s_{j_{l}}\beta_{r} = \beta'_{r} \in \Delta^{+}$,  
we deduce that 
$\alpha_{j_{l}},\,\beta_{r}-\alpha_{j_{l}} \in R$. 
We claim that 
%
%
\begin{equation} \label{eq:Sa}
s_{j_{l}}\alpha \in S \qquad 
\text{for all $\alpha \in R \setminus 
\bigl\{ \alpha_{j_{l}},\, \beta_{r}-\alpha_{j_{l}} \bigr\}$}. 
\end{equation}
It can be easily checked that 
$s_{j_{l}}\alpha \in \Delta^{+}$ and 
$(s_{j_{l}}\alpha,\beta'_{r}) = 1$. 
We show that $\Ht(s_{j_{l}} \alpha) < \Ht(\beta'_{r})$. 
By Lemma~\ref{lema}, 
$(\alpha,\alpha_{j_{l}}) \in \{0,\pm 1\}$.
If $(\alpha,\alpha_{j_{l}}) = 1$, then 
it is obvious that $\Ht(s_{j_{l}} \alpha) < \Ht(\beta'_{r})$. 
Assume that $(\alpha,\alpha_{j_{l}}) = 0$. 
Suppose, for a contradiction, that 
$\Ht(s_{j_{l}} \alpha) \ge \Ht(\beta'_{r})$. 
Since $\Ht(\alpha) < \Ht(\beta_{r})$ and 
$\Ht(\beta'_{r}) = \Ht(\beta_{r}) -1$, we have 
$\Ht(s_{j_{l}} \alpha) = \Ht(\beta'_{r})$. 
Since $(s_{j_{l}} \alpha,\beta'_{r})=1$, 
we see that $\beta'_{r} - s_{j_{l}} \alpha \in \Delta$. 
However, $\Ht(\beta'_{r} - s_{j_{l}} \alpha) =0$, 
which is a contradiction. 
Assume that 
$(\alpha,\alpha_{j_{l}}) = -1$; in this case, 
$s_{j_{l}}\alpha = \alpha+\alpha_{j_{l}}$. 
Suppose, for a contradiction, that 
$\Ht(s_{j_{l}}\alpha) \ge \Ht(\beta'_{r})$; 
since $\Ht(s_{j_{l}}\alpha) = \Ht(\alpha)+1$ and 
$\Ht(\beta'_{r}) = \Ht(\beta_{r})-1$, and since 
$\Ht(\alpha) < \Ht(\beta_{r})$, it follows that 
$\Ht(s_{j_{l}}\alpha)$ is equal to either 
$\Ht(\beta'_{r})$ or $\Ht(\beta'_{r})+1$. 
By the same reasoning as above, 
we see that $\Ht(s_{j_{l}}\alpha) \ne \Ht(\beta'_{r})$. 
Hence, $\Ht(s_{j_{l}}\alpha) = \Ht(\beta'_{r})+1$, 
and hence $\Ht(\alpha)=\Ht(\beta_{r})-1$. 
Also, we see that $s_{\alpha}\beta_{r} = \beta_{r}-\alpha \in \Delta^{+}$. 
By these facts, we deduce that $\alpha = \beta_{r}-\alpha_{j}$ 
for some $j \in I$. Then,
\begin{equation*}
-1 = (\alpha,\alpha_{j_{l}}) = 
(\beta_{r},\alpha_{j_{l}})-
(\alpha_{j},\alpha_{j_{l}}) = 
1 - (\alpha_{j},\alpha_{j_{l}}), 
\end{equation*}
and hence $(\alpha_{j},\alpha_{j_{l}}) = 2$. 
Therefore we obtain $j = j_{l}$. Thus, 
$\alpha = \beta_{r}-\alpha_{j_{l}}$, 
which is a contradiction. 
Thus we have proved 
$\Ht(s_{j_{l}} \alpha) < \Ht(\beta'_{r})$ in all cases. 
Hence we obtain the map
\begin{equation*}
s_{j_{l}} : R \setminus 
\bigl\{ \alpha_{j_{l}},\, \beta_{r}-\alpha_{j_{l}} \bigr\} 
\rightarrow S, \qquad \alpha \mapsto s_{j_{l}}\alpha.
\end{equation*}
Similarly, we obtain the inverse map 
\begin{equation*}
s_{j_{l}} : S \rightarrow R \setminus 
\bigl\{ \alpha_{j_{l}},\, \beta_{r}-\alpha_{j_{l}} \bigr\}, 
\qquad \alpha \mapsto s_{j_{l}}\alpha.
\end{equation*}
Recall that $\Ht(\beta'_{r})=\Ht(\beta_{r})-1=h-1$. 
We get
\begin{align*}
\# R & = \# (R \setminus 
\bigl\{ \alpha_{j_{l}},\, \beta_{r}-\alpha_{j_{l}} \bigr\}) + 2 \\
& = \# S + 2 = 2(h-2) + 2 = 2(h-1),
\end{align*}
as desired. Thus we have proved Lemma~\ref{lem3}. 
\end{proof}
%
%
\begin{rem} \label{remb}
Keep the notation and setting above. 
Let $1 \le r \le l$. We set
\begin{equation}
R_{r}:=
\bigl\{ \alpha \in \Delta^{+} \mid 
\Ht(\alpha) < \Ht(\beta_{r}),\,
(\alpha,\beta_{r})=1\bigr\}. 
\end{equation}
If $\alpha \in R_{r}$, then 
$s_{\alpha}\beta_{r} = \beta_{r}-\alpha \in \Delta^{+}$; 
it can be easily checked that $\beta_{r}-\alpha \in R_{r}$. 
Thus, $\sigma:\alpha \mapsto \beta_{r}-\alpha$ 
is an involution on $R_{r}$; 
notice that $\sigma$ is fixed-point-free 
(otherwise, $\beta_{r}=2\alpha$ for some $\alpha \in R_{r}$). 
Now, by Lemma~\ref{lem2},
\begin{equation*}
B_{r}:=
\bigl\{ \beta_{t} \mid r < t \le l,\,(\beta_{t},\beta_{r})=1\bigr\} 
\end{equation*}
is a subset of $R_{r}$; recall from Lemma~\ref{lem3} 
that $\#B_{r}=h-1$ and $\#R_{r}=2(h-1)$, 
where $h:=\Ht(\beta_{r})$. We claim that 
$\sigma(B_{r}) \sqcup B_{r} = R_{r}$. Indeed, 
suppose, for a contradiction, that 
$\sigma(\beta_{t}) = \beta_{r}-\beta_{t} \in B_{r}$ 
for some $\beta_{t} \in B_{r}$. 
Let $r < s \le l$ be such that 
$\sigma(\beta_{t}) = \beta_{s}$. Then, 
$\beta_{r} = \beta_{s}+\beta_{t}$, 
and hence $(\lambda,\beta_{r}) = (\lambda, \beta_{s})+(\lambda,\beta_{t}) = -2$ by 
Lemma~\ref{lemb}\,(1). However, this contradicts the fact that 
$\lambda$ is minuscule. Thus, $\sigma(B_{r}) \cap B_{r} = \emptyset$, 
and hence $\# ( \sigma(B_{r}) \cup B_{r} ) = 2(h-1) = \# R_{r}$. 
Therefore, $\sigma(B_{r}) \sqcup B_{r} = R_{r}$. 
\end{rem}

\begin{proof}[Proofs of \eqref{E:inv-la-} and \eqref{E:l-diff-}.]
Equation \eqref{E:inv-la-} follows from Lemma~\ref{lemb}\,(1). 
Let us prove equation \eqref{E:l-diff-}.
For each $v \in \bigl\{w,\,s_{\beta_{r}}w\bigr\}$, we set
\begin{align*}
X(v) & := \bigl\{ \gamma \in \Inv(v) \mid 
s_{\beta_{r}}\gamma \in \Delta^{+} \bigr\}, \\
Y(v) & := \bigl\{ \gamma \in \Inv(v) \mid 
s_{\beta_{r}}\gamma \in \Delta^{-} \bigr\};
\end{align*}
note that 
$\Inv (v) = X(v) \sqcup Y(v)$
for each $v \in \bigl\{w,\,s_{\beta_{r}}w\bigr\}$. 
We see that the map $X(w) \rightarrow X(s_{\beta_{r}}w)$, 
$\gamma \mapsto s_{\beta_{r}}\gamma$, is bijective. 
Hence,
\begin{equation*}
\ell(w)-\ell(s_{\beta_{r}}w) = 
\# Y(w) - \# Y(s_{\beta_{r}}w). 
\end{equation*}
Also, by Lemma~\ref{lema} and the assumption that 
$s_{\beta_{r}}w < w$, it follows that 
\begin{equation*}
Y(w) = \{ \beta_{r} \} \sqcup 
(\Inv (w) \cap R_{r}), \qquad 
Y(s_{\beta_{r}} w) = 
\Inv(s_{\beta_{r}}w) \cap R_{r};
\end{equation*}
for the definition of $R_{r}$, see Remark~\ref{remb}. 
Also, recall from Remark~\ref{remb} that 
for $\beta_{t} \in B_{r}$, 
$\sigma(\beta_{t}) = \beta_{r}-\beta_{t} \in R_{r} \setminus B_{r}$, and 
that $R_{r}=B_{r} \sqcup \sigma(B_{r})$.  
Since $\beta_{r} \in \Inv(w)$, we deduce that 
for each $\beta_{t} \in B_{r}$, 
one of the following (i), (ii), and (iii) holds: 
\begin{enumerate}
\item[(i)] $\beta_{t},\,\sigma(\beta_{t}) \in \Inv(w)$; 

\item[(ii)] $\beta_{t} \in \Inv(w)$ and 
$\sigma(\beta_{t}) \not\in \Inv(w)$; 

\item[(iii)]
$\beta_{t} \not\in \Inv(w)$ and 
$\sigma(\beta_{t}) \in \Inv(w)$. 
\end{enumerate}
We see that (i), (ii), and (iii) are equivalent to 
the following (i)', (ii)', and (iii)', respectively: 
\begin{enumerate}
\item[(i)'] $\beta_{t},\,\sigma(\beta_{t}) \not\in \Inv(s_{\beta_{r}}w)$; 

\item[(ii)'] $\beta_{t} \in \Inv(s_{\beta_{r}}w)$ and 
$\sigma(\beta_{t}) \not\in \Inv(s_{\beta_{r}}w)$; 

\item[(iii)'] $\beta_{t} \not\in \Inv(s_{\beta_{r}}w)$ and 
$\sigma(\beta_{t}) \in \Inv(s_{\beta_{r}}w)$. 
\end{enumerate}
Hence, if we set
\begin{align*}
& a:=\# \bigl\{ \beta_{t} \in B_{r} \mid 
  \beta_{t},\,\sigma(\beta_{t}) \in \Inv(w) \bigr\}, \\
& b:=\# \bigl\{ \beta_{t} \in B_{r} \mid 
  \beta_{t} \in \Inv(w),\,
  \sigma(\beta_{t}) \not\in \Inv(w) \bigr\}, \\
& c:=\# \bigl\{ \beta_{t} \in B_{r} \mid 
  \beta_{t} \notin \Inv(w),\,
  \sigma(\beta_{t}) \in \Inv(w) \bigr\}, 
\end{align*}
then 
\begin{align*}
\# Y(w) & = 1 + \#(\Inv (w) \cap R_{r}) = 1 + 2a + b + c, \\
\# Y(s_{\beta_{r}}w) & = 
\#(\Inv (s_{\beta_{r}}w) \cap R_{r}) = b + c,
\end{align*}
and hence 
\begin{equation} \label{eq4a}
\ell(w)-\ell(s_{\beta_{r}}w) = 
\# Y(w) - \# Y(s_{\beta_{r}}w) = 1+2a. 
\end{equation}

Now, we compute 
\begin{align*}
& \ell_{\lambda,r}^{-}(w)-\ell_{\lambda,r}^{-}(s_{\beta_{r}}w) \\
& = \# \bigl( \Inv(w) \cap 
\bigl\{\beta_{r},\beta_{r+1},\dots,\beta_{l}\bigr\} \bigr) - 
\# \bigl( \Inv(s_{\beta_{r}}w) \cap 
\bigl\{\beta_{r},\beta_{r+1},\dots,\beta_{l}\bigr\} \bigr) \\
& = 
\# \bigl(\{ \beta_{r} \} \sqcup \bigl(\Inv(w) \cap 
\bigl\{\beta_{r+1},\dots,\beta_{l}\bigr\} \bigr)\bigr) - 
\# \bigl( \Inv(s_{\beta_{r}}w) \cap 
\bigl\{\beta_{r+1},\dots,\beta_{l}\bigr\} \bigr)=:(\ast),
\end{align*}
where the last equality follows from 
the assumption that $s_{\beta_{r}}w < w$. 
Also, we deduce that for $r < s \le l$ with 
$(\beta_{s}, \beta_{r})=0$, $\beta_{s} \in \Inv(w)$ 
if and only if $\beta_{s} \in \Inv(s_{\beta_{r}}w)$. 
Thus, 
\begin{align*}
(\ast) & = 1 + 
\# (\Inv(w) \cap B_{r}) - 
\# (\Inv(s_{\beta_{r}}w) \cap B_{r}) \\
& = 1 + (a+b) - b = 1+a.
\end{align*}
Therefore, we obtain 
\begin{equation*}
2 ( \ell_{\lambda,r}^{-}(w)-\ell_{\lambda,r}^{-}(s_{\beta_{r}}w) )  - 1
=2 ( 1+a ) - 1 = 2a+1 \stackrel{\eqref{eq4a}}{=} 
\ell(w)-\ell(s_{\beta_{r}}w),
\end{equation*}
as desired. Thus we have proved \eqref{E:l-diff-}. 
\end{proof}

\subsection{Proofs of \eqref{E:inv-la+} and \eqref{E:l-diff+}.}
%
Let $\omega:I \rightarrow I$ be the Dynkin diagram automorphism 
induced by the longest element $\lng$, that is, $\lng \alpha_{i}=
-\alpha_{\omega(i)}$ for $i \in I$; notice that 
$\mcr{\lng}\vpi_{k} = \lng \vpi_{k} = - \vpi_{\omega(k)}$. 
Thus we see that $\vpi_{\omega(k)}$ is also minuscule. Also, we deduce that 
$y^{-1}=s_{i_{m}} \cdots s_{i_{1}} \in W^{\omega(\J)} = W^{I \setminus \{\omega(k)\}}$. 
We set $\nu:=y^{-1}\vpi_{\omega(k)}$; for $\gamma \in \Delta^{+}$, 
\begin{equation*}
(\gamma,\lambda)=1 \iff (\gamma,y^{-1}\mcr{\lng}\vpi_{k})=1 \iff 
(\gamma,y^{-1}(-\vpi_{\omega(k)}))=1 \iff (\gamma,\nu) = -1.
\end{equation*}
Hence it follows that $\Inv(w)_{\lambda}^{+} = \Inv(w)_{\nu}^{-}$ 
for $w \in W$. Also, it can be easily checked that 
$\ell_{\lambda,s}^{+}(w) = \ell_{\nu,s}^{-}(w)$ for all $w \in W$. 
Thus equations \eqref{E:l-diff-} and \eqref{E:l-diff+} follow from 
equations \eqref{E:l-diff+} and \eqref{E:l-diff-} (applied to $\nu$), 
respectively. 

Thus we have proved Proposition~\ref{P:L}.

\end{document}